\newtheorem{proposition}{Proposition}[section]
\newtheorem{theorem}{Theorem}[section]
\newtheorem{definition}{Definition}[section]
\newtheorem{corollary}{Corollary}[section]
\newtheorem{lemma}{Lemma}[section]
\newtheorem{remark}{Remark}[section]
\newtheorem{example}{Example}[section]
\DeclareMathOperator*{\esssup}{ess\,sup} 
\numberwithin{equation}{section}
\begin{document}

\markboth{}{}

\title[Discrete-type approximations for non-Markovian optimal stopping problems: Part I ] {Discrete-type approximations for non-Markovian optimal stopping problems: Part I}

\author{Dorival Le\~ao}

\address{Estatcamp. Rua Maestro Jo\~ao Seppe, 900, 13561-180, S\~ao Carlos, Brazil.}\email{leao@estatcamp.com.br}

\author{Alberto Ohashi}

\address{Departamento de Matem\'atica, Universidade de Bras\'ilia, 70910-900, Bras\'ilia, Brazil.}\email{amfohashi@gmail.com}

\author{Francesco Russo}

\address{ENSTA ParisTech, Institut Polytechnique de Paris,
 Unit\'e de Math\'ematiques appliqu\'ees, 828, boulevard des Mar\'echaux,
F-91120 Palaiseau, France} \email{francesco.russo@ensta-paristech.fr}

\date{\today}

\keywords{Optimal stopping; Stochastic Optimal Control} \subjclass{Primary: 60H35; Secondary: 65C30}

\begin{center}
\end{center}
\begin{abstract}
In this paper, we present a discrete-type approximation scheme to solve continuous-time optimal stopping problems based on fully non-Markovian continuous processes adapted to the Brownian motion filtration. The approximations satisfy suitable variational inequalities which allow us to construct $\epsilon$-optimal stopping times and optimal values in full generality. Explicit rates of convergence are presented for optimal values based on reward functionals of path-dependent SDEs driven by fractional Brownian motion. In particular, the methodology allows us to design concrete Monte-Carlo schemes for non-Markovian optimal stopping time problems as demonstrated in the companion paper by Bezerra, Ohashi and Russo.
\end{abstract}
\maketitle
\section{Introduction} 


An important and well-developed class of stochastic control problems is related to
 optimal stopping time. The goal is to find stopping
times, at which an underlying stochastic process $Z$ should be stopped in order to optimize the values of some given functional of interest. Essentially, the optimal stopping time problem is completely described by the so-called supermartingale Snell envelope

\begin{equation}\label{snelllint}
S (t):= \text{ess} \sup_{\tau\ge t} \mathbb{E} \left[ Z(\tau)  \mid \mathcal{F}_t \right], \quad 0 \leq t \leq T,
\end{equation}
written w.r.t a given filtration $\mathbb{F}=(\mathcal{F}_t)_{0\le t\le T}$, where esssup in (\ref{snelllint}) is computed over the class of all $\mathbb{F}$-stopping times located on $[t,T]$. The literature on probabilistic techniques in optimal stopping problems is vast. We
refer for instance to the expositions of \cite{lamberton, carmona} and other references therein for a broad overview of the techniques in this area.

One typical approach in solving an optimal stopping time problem is to discretize the Snell envelope (\ref{snelllint}) along a discrete-time period $t_0, \ldots, t_p$ by means of Euler-type discretization schemes. The main obstacle is the obtention of continuation values (expressed as $\mathcal{F}_{t_i}$- conditional expectations) in the dynamic programming algorithm. In this case, least-squares Monte-Carlo methods can be employed by using non-parametric regression techniques based on a suitable choice of regression polynomials (see e.g \cite{zanger} and other references therein). Another popular approach is to make use of some available representations for conditional expectations in terms of a suitable ratio of unconditional expectations obtained by using Malliavin calculus (see e.g \cite{bouchard2}). These approaches work well as long as one may reduce the information flow $(\mathcal{F}_{t_i})_{i=0}^p$ by a finite-dimensional random vector $X(t_0), \ldots, X(t_p)$. This is only possible if the state is Markovian. Otherwise, the computation of continuation values is a priori unfeasible from a computational point of view. A dual approach studied in \cite{rogers} has also been investigated by many authors (see e.g.~\cite{belomestny} and other references therein). In this approach, the key step is to find the ``optimal" martingale and the importance of the Markov property seems critical to obtain concrete basis functions to parameterize martingales (see e.g \cite{belomestny,schoenmakers}).



Beyond Markovian state processes, one  approach consists in expressing the Snell envelope $S$ as the unique solution of a reflected BSDE. In one hand, at a representation level, the theory of reflected BSDEs characterizes the semimartingale structure of $S$ in full generality. On the other hand, concrete methods to compute martingale representations are restricted to the Markovian case (see e.g \cite{bally,bally1,bouchard,bouchard1}). More recent approaches based on path-dependent PDEs (see e.g.~\cite{ekren}) yield differential representations of the Snell envelope (even in the fully nonlinear case) under pathwise regularity on the obstacles related to the reflected BSDE. We also emphasize the recent representation result by \cite{fuhrman} for the Snell envelope in terms of true BSDEs defined on an enlarged probability space, containing a jump part and involving sign
constraints. We observe the use of reflected BSDEs and path-dependent PDEs to solve concretely the optimal stopping problem beyond the Markovian case seems to be a very hard task. The key difficulty is that solving an optimal stopping time problem for a fully non-Markovian state is essentially a stochastic optimal control problem driven by an infinite-dimensional state.


Optimal stopping time problems based on reward path-dependent functionals of fully non-Markovian states arise in many applications and it remains a challenging task to design concrete ways of solving it. One typical example is the problem of pricing American options written on stochastic volatility models $(X,V)$ where the volatility variable $V$ is a functional of fractional Brownian motion (FBM) with long-range dependence (see e.g \cite{viens} and other references therein) or short-range dependence as recently observed by many works (see e.g. \cite{bayer,gatheral,forde} and other references therein). In this case, one cannot interpret $(X,V)$ as an ``augmented" finite-dimensional Markov process due to non-trivial correlations associated with FBM and hence a concrete solution of the related optimal stopping problem is very challenging. We stress even optimal stopping problems based on finite-dimensional ``augmented" Markovian-type models $(X,V)$ (e.g CIR-type models driven by Brownian motion) are not easy to solve it (see e.g \cite{rambharat,agarwal}). In practice, the volatility $V$ is not directly observed so that it has to be approximated. If the volatility is estimated with accuracy, then standard Markovian methods can be fairly implemented to solve the control problem. Otherwise, the optimal stopping problem is non-Markovian.


The goal of this paper is to present a systematic method of constructing continuation regions and optimal values for optimal stopping problems based on arbitrary continuous reward processes $Z$. We are particularly interested in the non-Markovian case, i.e., $Z = F(X)$ where $F$ is a path-dependent functional of an underlying non-Markovian continuous process $X$ which is adapted to a filtration generated by a $d$-dimensional Brownian motion $B$. In our philosophy, the reward process $Z$ is viewed as a generic non-anticipative functional of $B$, i.e.
\begin{equation}\label{ZB}
B\mapsto Z(B)
\end{equation}
rather than $X$ and it may be possibly based on different kinds of states not reducible to vectors of Markov processes: solutions of path-dependent SDEs, processes involving fractional Brownian motions and other singular functionals of Brownian motions.

The methodology is based on Le\~ao and Ohashi \cite{LEAO_OHASHI2013} and Le\~ao, Ohashi and Simas~\cite{LEAO_OHASHI2017.1} who show a very strong continuity/stability of a large class of Wiener functionals (\ref{ZB}) w.r.t continuous-time random walk structures $\mathscr{D}$ driven by suitable waiting times $T^k_n; n,k\ge 1$ (see (\ref{hittingtimes1}) and (\ref{hittingtimes2})) which encode the evolution of the Brownian motion at small scales $\epsilon_k\downarrow 0$ as $k\rightarrow +\infty$. Here, $\epsilon_k = \varphi(k)$ for a strictly decreasing function with inverse $\xi$. The main idea is to reduce the dimension by discretizing the Brownian filtration at a given time $t$ by

\begin{equation}\label{discreteFILTRATION}
\mathcal{F}^k_t:=\Big\{\bigcup_{j\ge 0}D_j \cap \{T^k_j \le t < T^k_{j+1}\}; D_j \in \sigma(\mathcal{A}^k_j), j\ge 0\Big\},
\end{equation}
where $\sigma(\mathcal{A}^k_j); j\ge 1$ is a sequence of sigma-algebras generated by random vectors

\begin{equation}\label{Aknint}
\mathcal{A}^k_j:= \Big(\Delta T^k_1, \eta^k_1, \ldots, \Delta T^k_j, \eta^k_j\Big)
\end{equation}
taking values on a cartesian product of a finite-dimensional discrete space (involving suitable signs $(\eta^k_j)_{j\ge 1}$ of $\mathscr{D}$) times $[0,T]$. In contrast to previous discretization schemes, our method has to be interpreted as a \textit{space-filtration} discretization scheme: (i) at random intervals $T^k_n \le t < T^k_n$, the Brownian motion lies on open subsets with dyadic end points of the form $\big((\ell-1) \epsilon_k , (\ell+1)\epsilon_k\big)$ for $\ell\in \mathbb{Z}$ and $-n \le \ell \le n$, (ii) $\mathcal{F}^k_t$ has the important property that $\mathcal{F}^k_t \cap \{T^k_n \le t < T^k_{n+1}\} = \sigma(\mathcal{A}^k_n) \cap \{T^k_n \le t < T^k_{n+1}\}; n\ge 0$.

The structure $\mathscr{D}$ is constructed from $\{\mathcal{A}^k_n; k,n\ge 1\}$ and the first step is to evaluate the optimal stopping time problem imbedded into the structure $\mathscr{D}$ where all functionals of the Brownian motion are replaced by functionals of $\{\mathcal{A}^k_n; n,k\ge 1\}$. Two classes of admissible stopping times are considered. The first class consists of all Brownian stopping times that take values in $[0,T]$. The second class further restricts the set of allowed values to the discrete grid $\{n; n=0,1, \ldots, e(k,T)\}$ where $e(k,T) = \lceil d\epsilon^{-2}_k T\rceil$ is the relevant number of steps associated with a suitable discrete structure $\big((S^k)_{k\ge 1},\mathscr{D}\big)$ for (\ref{snelllint}). The number of steps is determined by Large Deviation principles (see Lemma 2.2 in \cite{koshnevisan}, Lemma \ref{ratetenk} and (\ref{tail6})) and it only depends on the dimension $d$ of the Brownian motion and the discretization level $\epsilon_k$. The structure $\big((S^k)_{k\ge 1},\mathscr{D}\big)$ is typically constructed by means of Euler-type schemes driven by $\mathscr{D}$. The optimal values at time $t=0$ for the two problems are denoted by $S(0)$ and $V^k_0$, respectively.

The advantage of our methodology in comparison to classical approaches lies on a concrete analysis of optimal stopping problems for non-Markovian states: First, recalling our philosophy of interpreting states (\ref{ZB}) as Wiener functionals, the filtration $(\mathcal{F}^k_t)_{0\le t\le T}$ summarizes the whole Brownian history in such way that the (infinite-dimensional) information flow of the optimal stopping problem (\ref{snelllint}) can be reduced to the computation of conditional expectations w.r.t random vectors (\ref{Aknint}) along the time grid $n=0, \ldots, e(k,T)$. The non-Markovian states are then viewed as functionals of (\ref{Aknint}) and, in contrast to previous methods, continuation values associated with our approximating optimal stopping problem can be fairly computed by means of standard regression techniques based on a perfect simulation algorithm (Burq and Jones \cite{Burq_Jones2008}) for the first time Brownian motion hits $\pm 1$. This allows us to drastically reduce the dimension of the continuous-time non-Markovian problem in terms of a high-dimensional discrete-time dynamic programming algorithm. Second, the strong stability of Wiener functionals (\ref{ZB}) w.r.t $\mathscr{D}$ allows us to concretely treat non-Markovian states that are not within reach of the classical methods based on reflected BSDEs, path-dependent PDEs and other discretization methods.

Theorems \ref{mainTh} and \ref{optSTres} present the abstract results for a generic optimal stopping time problem with generic states admitting an imbedded discrete structure in the language of \cite{LEAO_OHASHI2017.1}. As a test of relevance of our theory, Section \ref{ExamplesSECTION} presents concrete examples based on path-dependent SDEs driven by fractional Brownian motion with parameter $\frac{1}{2}\le H < 1$. Propositions \ref{exBM} and \ref{exFBM} provide rates of convergence of $V^k_0$ to $S(0)$. More importantly, as explained in Section \ref{appliedsec}, $V^k_0$ can be concretely computed by means of the classical discrete-time dynamic programming principle over $0\le n\le e(k,T)$ based on the information set generated by (\ref{Aknint}).

The amount of work (complexity) of the algorithm in a typical non-Markovian state driven by a fractional Brownian motion, for a given accuracy $\text{e}_1$, it is proportional to $O(\epsilon^{1-2\lambda}_{k^*})$ for $H-\frac{1}{2}< \lambda  < \frac{1}{2}$ and $k^* =\xi(\text{e}^{\frac{1}{1-2\lambda}}_1)$ which in turn implies $e(k^*,T) = \lceil d \epsilon^{-2}_{k^*} T\rceil$ number of steps in the algorithm. The underlying state $\mathcal{A}^{k^*}_{e(k^*,T)}$ lives in a $e(k^*,T)(d +1)$-dimensional space. A concrete Monte-Carlo scheme is developed in the companion paper by Bezerra, Ohashi and Russo \cite{ohashi}. Lastly, we mention that there is no conceptual obstruction to compute sensitivities of $S$ w.r.t $B$ (hedging strategies) by projecting the optimization problem onto the filtration $(\mathcal{F}^k_t)_{0\le t\le T}$ and working with the differential operators introduced in \cite{LEAO_OHASHI2017.1} which describe Doob-Meyer decompositions for test processes. This will be investigated in a future project.



This paper is organized as follows. In Section \ref{preliminaries}, we present the basic objects of our discretization scheme. In Section \ref{varsection}, we present the main results of the paper and we explain how the methodology can be concretely used to solve optimal stopping problems beyond the Markovian case. In Section \ref{sectionproof}, we present the proofs of Proposition \ref{equality} and Theorem \ref{mainTh}. Section \ref{ExamplesSECTION} illustrates the methodology with two examples. Section \ref{appliedsec} explain how to operationalize the discrete-time dynamic programming principle in a concrete example.

\section{Preliminaries}\label{preliminaries}
Throughout this article, we are going to fix a $d$-dimensional Brownian motion $B = \{B^{1},\ldots,B^{d}\}$ on the usual stochastic basis $(\Omega, \mathbb{F}, \mathbb{P})$, where $\Omega$ is the space $C([0,+\infty);\mathbb{R}^d) := \{f:[0,+\infty) \rightarrow
\mathbb{R}^d~\text{continuous}\}$, equipped with the usual topology
of uniform convergence on compact intervals.
$\mathbb{P}$ is the Wiener measure
on $\Omega$ such that $\mathbb{P}\{B(0) = 0\}=1 $ and $\mathbb{F}:=(\mathcal{F}_t)_{t\ge 0}$ is the
usual $\mathbb{P}$-augmentation of the natural filtration
generated by the Brownian motion. In the sequel, we recall the basic structure of the theory presented by the works~\cite{LEAO_OHASHI2013, LEAO_OHASHI2017.1} and we refer the reader to these works for all unexplained points.

For a fixed positive sequence $\{\epsilon_k;k\ge 1\}$ such that $\sum_{k\ge 1}\epsilon^2_k< +\infty$, and for each $j = 1, \ldots, d$, we define $T^{k,j}_0 := 0$ a.s. and we set

\begin{equation}\label{hittingtimes1}
T^{k,j}_n := \inf\big\{T^{k,j}_{n-1}< t <\infty;  |B^{j}(t) - B^{j}(T^{k,j}_{n-1})| = \epsilon_k\big\}, \quad n \ge 1.
\end{equation}
For each $j\in \{1,\ldots,d \}$, the family $(T^{k,j}_n)_{n\ge 0}$ is a sequence of $\mathbb{F}$-stopping times and the strong Markov property implies the increments $\{T^{k,j}_n - T^{k,j}_{n-1}; n\ge 1\}$ is an i.i.d sequence with the same distribution as $T^{k,j}_1$. Moreover, $T^{k,j}_1$ is an absolutely continuous variable (see \cite{LEAO_OHASHI2017.1, borodin}).

From this family of stopping times, we define $A^{k}:=(A^{k,1},\ldots, A^{k,d})$ as the $d$-dimensional step process whose components are given by

$$
A^{k,j} (t) := \sum_{n=1}^{\infty}\epsilon_k~\sigma^{k,j}_n1\!\!1_{\{T^{k,j}_n\leq t \}};~t\ge0,
$$
where

$$
\sigma^{k,j}_n:=\left\{
\begin{array}{rl}
1; & \hbox{if} \ B^{j} (T^{k,j}_n) - B^{j} (T^{k,j}_{n-1}) > 0 \\
-1;& \hbox{if} \ B^{j} (T^{k,j}_n) - B^{j} (T^{k,j}_{n-1})< 0, \\
\end{array}
\right.
$$
for $k,n\ge 1$ and $j=1, \ldots , d$. Let $\mathbb{F}^{k,j} := \{ \mathcal{F}^{k,j}_t; t\ge 0 \} $ be the natural filtration generated by $\{A^{k,j}(t);  t \ge 0\}$. The multi-dimensional filtration generated by $A^k$ is naturally characterized as follows. Let $\mathbb{F}^{k} := \{\mathcal{F}^{k}_t ; 0 \leq t <\infty\}$ be the product filtration given by $\mathcal{F}^{k}_t := \mathcal{F}^{k,1}_t \otimes\mathcal{F}^{k,2}_t\otimes\cdots\otimes\mathcal{F}^{k,d}_t$ for $t\ge 0$. Let $\mathcal{T}:=\{T^{k}_m; k,m\ge 0\}$ be the order statistics obtained from the family of random variables $\{T^{k,j}_\ell; \ell,k\ge 0 ;j=1,\ldots,d\}$. That is, we set $T^{k}_0:=0$,

\begin{equation}\label{hittingtimes2}
T^{k}_1:= \inf_{1\le j\le d}\Big\{T^{k,j}_1 \Big\},\quad T^{k}_n:= \inf_{\substack {1\le j\le d\\ m\ge 1} } \Big\{T^{k,j}_m ; T^{k,j}_m \ge  T^{k}_{n-1}\Big\}
\end{equation}
for $n\ge 1$.

The structure $\mathscr{D} :=\{\mathcal{T}, A^{k,j}; 1\le j\le d, k\ge 1\}$ is a \textit{discrete-type skeleton} for the Brownian motion in the language of \cite{LEAO_OHASHI2017.1} (see Def. 2.1 in \cite{LEAO_OHASHI2017.1}). Throughout this article, we set

\begin{equation}\label{Nk}
\Delta T^{k}_n: = T^{k}_n - T^{k}_{n-1},\quad N^{k}(t):=\max\{n; T^{k}_n\le t\},\quad \bar{t}_k:=\max\{T^k_n; T^k_n \le t\}; t\ge 0,
\end{equation}
and
$$
\eta^{k,j}_n:=\left\{
\begin{array}{rl}
1; & \hbox{if} \  \Delta A^{k,j} (T^k_n)>0 \\
-1;& \hbox{if} \  \Delta A^{k,j} (T^k_n)< 0 \\
0;& \hbox{if} \ \Delta A^{k,j} (T^k_n)=0,
\end{array}
\right.
$$
for $n\ge 1$. Let us denote $\eta^k_n:=\big(\eta^{k,1}_n, \dots, \eta^{k,d}_n\big)$,
$$\mathbb{I}_k:=\Big\{ (i^k_1, \ldots, i^k_d); i^k_\ell\in \{-1,0,1\}~\forall \ell \in \{1,\ldots, d\}~\text{and}~\sum_{j=1}^d|i^k_j|=1   \Big\}$$
and $\mathbb{S}_k:=(0,+\infty)\times \mathbb{I}_k$ for $k,n\ge 1$. The $n$-fold Cartesian product of $\mathbb{S}_k$ is denoted by $\mathbb{S}_k^n$ and a generic element of $\mathbb{S}^n_k$ will be denoted by $\mathbf{b}^k_n := (s^k_1,\tilde{i}^k_1, \ldots, s^k_n, \tilde{i}^k_n)\in \mathbb{S}^n_k$ where $(s^k_r,\tilde{i}^k_r)\in (0,+\infty)\times \mathbb{I}_k$ for $1\le r\le n$. We also denote

$$t^k_j: = \sum_{\ell=1}^j s^k_\ell,$$
for a given list $(s^k_1, \ldots, s^k_j)\in (0,+\infty)^j$ where $j\ge 1$. The driving noise in our methodology is given by the following discrete-time process

\begin{equation}\label{Aknvariable}
\mathcal{A}^k_n:= \Big(\Delta T^k_1, \eta^k_1, \ldots, \Delta T^k_n, \eta^k_n\Big)\in \mathbb{S}^n_k~a.s.
\end{equation}
One should notice that $\mathcal{F}^k_{T^k_n} = (\mathcal{A}^k_n)^{-1}(\mathcal{B}(\mathbb{S}^n_k))$ up to $\mathbb{P}$-null sets, where $\mathcal{B}(\mathbb{S}^k_n)$ is the Borel $\sigma$-algebra generated by $\mathbb{S}^n_k; n\ge 1$.


\

\noindent \textbf{Transition Probabilities.} The law of the system will evolve according to the following probability measure defined on

$$\mathbb{P}^k_n(E):=\mathbb{P}\{\mathcal{A}^k_n\in E\}; E\in \mathcal{B}(\mathbb{S}^n_k), n\ge 1.$$
 By the very definition, $\mathbb{P}^k_{n}(\cdot) = \mathbb{P}^k_{r}(\cdot\times \mathbb{S}^{r-n}_k)$ for any $r> n\ge 1$. By construction $\mathbb{P}^k_{r}(\mathbb{S}^{n}_k\times \cdot)$ is a regular measure and $\mathcal{B}(\mathbb{S}_k)$ is countably generated, then it is well-known there exists ($\mathbb{P}^k_{n}$-a.s. unique) a disintegration $\nu^k_{n,r}: \mathcal{B}(\mathbb{S}^{r-n}_k)\times\mathbb{S}^{n}_k\rightarrow[0,1]$ which realizes

$$\mathbb{P}^k_{r}(D) = \int_{\mathbb{S}^{n}_k}\int_{\mathbb{S}^{r-n}_k} 1\!\!1_{D}(\textbf{b}^k_{n},q^k_{n,r})\nu^k_{n,r} (dq^k_{n,r}|\textbf{b}^k_{n})\mathbb{P}^k_{n}(d\textbf{b}^k_{n}),$$
for every $D\in \mathcal{B}(\mathbb{S}^{r}_k)$, where $q^k_{n,r}$ is the projection of $\textbf{b}^k_r$ onto the last $(r-n)$ components, i.e., $q^k_{n,r} = (s^k_{n+1},\tilde{i}^k_{n+1}, \ldots,s^k_{r},\tilde{i}^k_{r} )$ for a list $\textbf{b}^k_r = (s^k_1,\tilde{i}^k_1, \ldots, s^k_r,\tilde{i}^k_r)\in \mathbb{S}^r_k$. Sometimes, we denote $\nu^k_{0,r}:=\mathbb{P}^k_r$ for $r\ge 1$. If $r=n+1$, we denote $\nu^k_{n+1}:=\nu^k_{n,n+1}$. For an explicit formula for this conditional probability, we refer the reader to \cite{LEAO_OHASHI2019}.


Let $\textbf{B}^p(\mathbb{F})$ be the space of c\`adl\`ag $\mathbb{F}$-adapted processes $Y$ such that

$$\|Y\|^p_{\textbf{B}^p}:=\mathbb{E}\| Y\|^p_{\infty} < \infty$$
where $\|Y\|_\infty:=\sup_{0\le t\le T}|Y(t)|$, $1\le p< \infty$ and $0 < T < +\infty$ is a fixed terminal time. In the sequel, $[\cdot, \cdot]$ is the $\mathbb{F}^k$-quadratic variation operator. The following class of $\mathbb{F}^k$-adapted processes whose the underlying differential structure and asymptotics were studied by \cite{LEAO_OHASHI2017.1} will play a key role in this work.

\begin{definition}\label{GASdef}
We say that $\mathcal{Y} = \big((X^k)_{k\ge 1},\mathscr{D}\big)$ is an \textbf{imbedded discrete structure} for $X\in \mathbf{B}^p(\mathbb{F})$ if $X^k$ is a sequence of $\mathbb{F}^k$-adapted pure jump processes of the form

\begin{equation}\label{purejumpmodel}
X^k(t) = \sum_{n=0}^\infty X^{k}(T^k_n)1\!\!1_{\{T^k_n\le t < T^k_{n+1}\}}; 0\le t\le T,
\end{equation}
it has integrable quadratic variation $\mathbb{E}[X^k,X^k](T)< \infty; k\ge 1$, and

\begin{equation}\label{scdef}
\lim_{k\rightarrow+\infty}\|X^k-X\|_{\textbf{B}^p}=0
\end{equation}
for some $p\ge 1$.
\end{definition}

\section{Construction of an imbedded structure for $S$ and near optimal stopping times}\label{varsection}
For $t\le T$, we denote $\mathcal{T}_t(\mathbb{F})$ as the set of all $\mathbb{F}$-stopping times $\tau$ such that $t\le \tau\le T$~a.s.. For $n\geq 0$, we denote by $\mathcal{T}_{k,n}(\mathbb{F}):=\mathcal{T}_{t}(\mathbb{F})$ for $t=T^k_n$. To shorten notation, we set $\mathcal{T}_{k,n}:=\mathcal{T}_{k,n}(\mathbb{F})$. Throughout this article, we assume that reward process $Z$ is an $\mathbb{F}$-adapted continuous process and it satisfies the integrability regularity condition

\

\noindent \textbf{(A1)} $\|Z\|^p_{\textbf{B}^p}< \infty~\forall p\ge 1.$

\

For a given reward process $Z$, let $S$ be the Snell envelope associated with $Z$

\[
S (t):= \text{ess} \sup_{\tau\in \mathcal{T}_t(\mathbb{F})} \mathbb{E} \left[ Z(\tau)  \mid \mathcal{F}_t \right], \quad 0 \leq t \leq T.
\]
We assume $S$ satisfies the following integrability condition:

\

\noindent \textbf{(A2)} $\|S\|^p_{\textbf{B}^p}< \infty~\forall p\ge 1.$

\

Assumptions \textbf{(A1-A2)} are not essential in the sense that the range of integrability can be relaxed. In order to simplify the exposition, we then assume \textbf{(A1-A2)} hold true. Since the optimal stopping time problem at hand takes place on the compact set $[0,T]$, it is crucial to know the correct number of periods in our discretization scheme. For this purpose, let $\lceil x\rceil$ be the smallest natural number bigger or equal to $x\ge 0$. We then denote

$$e(k,T):= d\lceil \epsilon^{-2}_k T\rceil; k\ge 1.$$

\begin{lemma}\label{ratetenk}
For any $0 < T <\infty$,
$$\mathbb{E}|T^{k}_{e(k,T)}- T|^2 \rightarrow 0$$
and $T^k_{e(k,T)}\rightarrow T$~a.s. as $k\rightarrow \infty$.
\end{lemma}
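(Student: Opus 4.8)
The plan is to sandwich the pooled order statistic $T^k_{e(k,T)}$ between the coordinatewise hitting times and then exploit the i.i.d.\ renewal structure of each coordinate. Write $m_k := \lceil \epsilon^{-2}_k T\rceil$, so that $e(k,T) = d\,m_k$. The crucial elementary step, and the one I expect to be the main obstacle, is the pathwise two-sided bound
$$
\min_{1\le j\le d} T^{k,j}_{m_k} \;\le\; T^k_{e(k,T)}\;\le\; \max_{1\le j\le d} T^{k,j}_{m_k}\qquad\text{a.s.}
$$
To obtain the upper bound, set $M:=\max_{1\le j\le d} T^{k,j}_{m_k}$; for each coordinate $j$ the $m_k$ times $T^{k,j}_1<\cdots<T^{k,j}_{m_k}\le M$ all lie below $M$, so at least $d\,m_k$ elements of the pooled family $\{T^{k,j}_\ell\}$ are $\le M$, whence the $(d\,m_k)$-th order statistic satisfies $T^k_{e(k,T)}\le M$. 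The lower bound is dual: with $m:=\min_{1\le j\le d} T^{k,j}_{m_k}$, each coordinate contributes at most $m_k-1$ hitting times strictly below $m$ (since $T^{k,j}_{m_k}\ge m$), so fewer than $d\,m_k$ pooled times are $<m$ and therefore $T^k_{e(k,T)}\ge m$. These counting arguments use only that the $T^{k,j}_\ell$ are a.s.\ distinct.

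Having reduced the statement to the asymptotics of $\min_{j} T^{k,j}_{m_k}$ and $\max_{j} T^{k,j}_{m_k}$, I would analyse each coordinate separately. Since $T^{k,j}_{m_k}=\sum_{n=1}^{m_k}\Delta T^{k,j}_n$ is a sum of $m_k$ i.i.d.\ copies of $T^{k,j}_1$, and Brownian scaling gives $T^{k,j}_1\stackrel{d}{=}\epsilon_k^2\,\tau$ with $\tau$ the exit time of a standard one-dimensional Brownian motion from $(-1,1)$, I would use the classical moments $\mathbb{E}\tau=1$, $\mathrm{Var}\,\tau=\tfrac23$ to get $\mathbb{E}\,T^{k,j}_{m_k}=m_k\epsilon_k^2$ and $\mathrm{Var}\,T^{k,j}_{m_k}=\tfrac23 m_k\epsilon_k^4$. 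Because $T\le m_k\epsilon_k^2< T+\epsilon_k^2$ and $\epsilon_k\to0$, the mean tends to $T$ while the variance is $O(\epsilon_k^2)\to0$; hence $T^{k,j}_{m_k}\to T$ in $L^2$ for every $j$.

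The $L^2$ claim of the lemma then follows from the sandwich: since $|T^k_{e(k,T)}-T|\le |\min_{j} T^{k,j}_{m_k}-T|+|\max_{j} T^{k,j}_{m_k}-T|$ and there are only finitely many coordinates, $\mathbb{E}|T^k_{e(k,T)}-T|^2\le 2\sum_{j=1}^d \mathbb{E}|T^{k,j}_{m_k}-T|^2\to0$. For the almost-sure statement I would invoke the summability hypothesis $\sum_k\epsilon_k^2<\infty$: by Chebyshev, $\mathbb{P}(|T^{k,j}_{m_k}-m_k\epsilon_k^2|>\delta)\le \tfrac23 m_k\epsilon_k^4\,\delta^{-2}=O(\epsilon_k^2)$, which is summable in $k$, so Borel--Cantelli yields $T^{k,j}_{m_k}-m_k\epsilon_k^2\to0$ a.s.; combined with $m_k\epsilon_k^2\to T$ this gives $T^{k,j}_{m_k}\to T$ a.s.\ for each of the finitely many $j$, hence $\min_{j} T^{k,j}_{m_k}\to T$ and $\max_{j} T^{k,j}_{m_k}\to T$ a.s., and the sandwich forces $T^k_{e(k,T)}\to T$ a.s.

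In summary, the only genuinely delicate point is the combinatorial sandwich; once it is in place the problem decouples into $d$ independent renewal computations and the two convergences are routine. A minor technical prerequisite is control of the moments of $T^{k,j}_1$ (at least the bound $\mathbb{E}(T^{k,j}_1)^2=O(\epsilon_k^4)$), which follows from the explicit law of the Brownian exit time recalled with the references cited after \eqref{hittingtimes1}.
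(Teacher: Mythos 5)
Your proposal is correct and follows essentially the same route as the paper: the pathwise sandwich $\min_{1\le j\le d}T^{k,j}_{m_k}\le T^k_{e(k,T)}\le \max_{1\le j\le d}T^{k,j}_{m_k}$ (equivalently, the paper's bound $|T^k_{e(k,T)}-T|\le \max\{|\max_j T^{k,j}_{m_k}-T|,|\min_j T^{k,j}_{m_k}-T|\}$) followed by a coordinatewise analysis. The only difference is that the paper delegates the single-coordinate asymptotics to Lemma 2.2 of Khoshnevisan--Lewis, whereas you prove them directly via Brownian scaling, Chebyshev and Borel--Cantelli (correctly invoking the standing assumption $\sum_k\epsilon_k^2<\infty$ for the almost-sure part), and you also make explicit the counting argument behind the sandwich that the paper merely asserts with ``just notice that''.
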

\begin{proof}
Just notice that

$$
|T^k_{d\lceil \epsilon_k^{-2}T \rceil} - T|\le \max\Big\{|\max_{1\le i\le d} T^{k,i}_{\lceil \epsilon_k^{-2}T \rceil} - T|, |\min_{1\le i\le d}T^{k,i}_{\lceil \epsilon_k^{-2}T \rceil} - T|   \Big\}~a.s.
$$
for every $k\ge 1$. We then apply Lemma 2.2 in \cite{koshnevisan} to conclude the proof.
\end{proof}
Due to this result, we will reduce the analysis to the deterministic number of periods $e(k,T)$. We denote $D^{k,m}_n$ as the set of all $\mathbb{F}^k$-stopping times of the form

$$
\tau = \sum_{i=n}^{m}T^k_i1\!\!1_{\{\tau = T^k_i\}},
$$
where $\{\tau = T^k_i; i=n, \ldots, m\}$ is a partition of $\Omega$ and $0\le n \le m$. Let us denote $D^k_{n,T} := \{\eta\wedge T; \eta\in D^{k,\infty}_n\}$. We observe that $D^{k,\infty}_0$ is the set of all $\mathbb{F}^k$-totally inaccessible stopping times.

Let $\{Z^k; k\ge 1\}$ be a sequence of pure jump processes of the form (\ref{purejumpmodel}) and let $\{S^k; k\ge 1\}$ be the associated value process given by

\begin{equation}\label{discretevaluep}
S^k(t) := \sum_{n=0}^{\infty} S^k(T^k_n)\mathds{1}_{\{T^k_n\le t \wedge T^k_{e(k,T)}< T^k_{n+1}\}}; 0\le t\le T,
\end{equation}
where for $0\le n\le e(k,T)$, we denote

$$S^k(T^k_n):= \esssup_{\tau\in D^{k,e(k,T)}_{n}}\mathbb{E}\Big[ Z^k(\tau\wedge T)\big|\mathcal{F}^k_{T^k_n}\Big]\quad \text{and}\quad U^{\mathcal{Y},k,\textbf{p}}S(T^k_n):=\mathbb{E}\Bigg[\frac{\Delta S^{k}(T^k_{n+1})}{\epsilon^2_k}\Big|\mathcal{F}^k_{T^k_n}\Bigg].
$$

The following result shows the fundamental role played by the imbedded discrete structure driven by a skeleton $\mathscr{D} :=\{\mathcal{T}, A^{k,j}; 1\le j\le d, k\ge 1\}$: It enables to lift the discrete structure into the Brownian filtration without loosing $\mathbb{F}^k$-adaptedness in the optimization problem. We postpone the proof of Proposition \ref{equality} to Section \ref{sectionproof}.

\begin{proposition}\label{equality}
Let $Z^k$ be a pure jump process of the form (\ref{purejumpmodel}). For each $n\ge 0$ and $k\ge 1$, we have

$$
\esssup_{\tau\in D^{k}_{n,T}}\mathbb{E}\Big[Z^k\big(\tau\big)|\mathcal{F}^k_{T^k_n}\Big] = \esssup_{\tau\in \mathcal{T}_{k,n}}\mathbb{E}\Big[Z^k\big(\tau\big)|\mathcal{F}^k_{T^k_n}\Big]~a.s.
$$
\end{proposition}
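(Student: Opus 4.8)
The plan is to prove the two inequalities separately; the containment $\esssup_{\tau\in D^k_{n,T}}\le \esssup_{\tau\in \mathcal{T}_{k,n}}$ is the routine one and the reverse is where the work lies. For the easy direction I would simply note that $\mathbb{F}^k$ is a sub-filtration of $\mathbb{F}$, so every $\eta\in D^{k,\infty}_n$ is in particular an $\mathbb{F}$-stopping time with $\eta\ge T^k_n$; hence on $\{T^k_n\le T\}$ the capped time $\eta\wedge T$ is an $\mathbb{F}$-stopping time taking values in $[T^k_n,T]$, i.e.\ $D^k_{n,T}\subseteq \mathcal{T}_{k,n}$. Taking the essential supremum over the smaller family then yields $\esssup_{\tau\in D^k_{n,T}}\mathbb{E}[Z^k(\tau)\mid\mathcal{F}^k_{T^k_n}]\le \esssup_{\tau\in \mathcal{T}_{k,n}}\mathbb{E}[Z^k(\tau)\mid\mathcal{F}^k_{T^k_n}]$.

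For the reverse inequality I would exploit that, by the pure-jump form (\ref{purejumpmodel}), $Z^k$ is constant on each stochastic interval $[T^k_j,T^k_{j+1})$. Given an arbitrary $\tau\in\mathcal{T}_{k,n}$, set $\hat\tau:=T^k_{N^k(\tau)}\wedge T=\bar\tau_k\wedge T$, the last skeleton time not exceeding $\tau$; then $Z^k(\tau)=Z^k(\hat\tau)$ a.s., so $\mathbb{E}[Z^k(\tau)\mid\mathcal{F}^k_{T^k_n}]=\mathbb{E}[Z^k(\hat\tau)\mid\mathcal{F}^k_{T^k_n}]$ and it suffices to dominate the latter by the left-hand side. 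Since $Z^k(T^k_j)$ is $\mathcal{F}^k_{T^k_j}$-measurable and $n\le j$ on $\{\hat\tau=T^k_j\}$, iterated conditioning through $\mathcal{F}^k_{T^k_n}\subseteq\mathcal{F}^k_{T^k_j}$ gives
\[
\mathbb{E}\big[Z^k(\hat\tau)\mid\mathcal{F}^k_{T^k_n}\big]=\mathbb{E}\Big[\textstyle\sum_{j\ge n}Z^k(T^k_j)\,p_j\ \Big|\ \mathcal{F}^k_{T^k_n}\Big],\qquad p_j:=\mathbb{P}\big(\hat\tau=T^k_j\mid\mathcal{F}^k_{T^k_j}\big).
\]
The goal is then to recognise $(p_j)_j$ as an $\mathbb{F}^k$-randomised stopping rule and to invoke the standard fact that randomisation does not improve a discrete optimal-stopping value, bounding the weighted average by the discrete Snell envelope $S^k(T^k_n)$, which equals $\esssup_{\sigma\in D^k_{n,T}}\mathbb{E}[Z^k(\sigma)\mid\mathcal{F}^k_{T^k_n}]$.

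The crux, and the step I expect to be the main obstacle, is exactly the lifting hidden here: $\hat\tau$ is produced by an $\mathbb{F}$-stopping rule, so a priori $\{\hat\tau=T^k_j\}$ only lies in $\mathcal{F}_{T^k_j}$ and the weights $p_j$ need not assemble into a genuine $\mathbb{F}^k$-adapted (randomised) rule, because the survival events $\{\hat\tau\le T^k_j\}$ are not $\mathcal{F}^k_{T^k_j}$-measurable and $\sum_j p_j$ need not equal $1$ pathwise. Closing this gap requires the specific structure of the skeleton $\mathscr{D}$: the identity $\mathcal{F}^k_{T^k_n}=(\mathcal{A}^k_n)^{-1}(\mathcal{B}(\mathbb{S}^n_k))$ together with the property $\mathcal{F}^k_t\cap\{T^k_n\le t<T^k_{n+1}\}=\sigma(\mathcal{A}^k_n)\cap\{T^k_n\le t<T^k_{n+1}\}$ and the disintegration kernels $\nu^k_{n+1}$ should be used to show that the continuation values computed in $\mathbb{F}$ along the grid coincide with those computed in $\mathbb{F}^k$, so that the extra Brownian information carried between consecutive skeleton times cannot enlarge the value of the problem for the $\mathbb{F}^k$-adapted reward $Z^k$. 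Concretely, I would try to prove that $(S^k(T^k_j))_{j\ge n}$, known to be an $\mathbb{F}^k$-supermartingale along $\mathcal{T}$, is in fact an $\mathbb{F}$-supermartingale when sampled along $\mathcal{T}$; optional sampling at the grid-valued $\mathbb{F}$-stopping time $\hat\tau$ followed by a final conditioning on $\mathcal{F}^k_{T^k_n}\subseteq\mathcal{F}_{T^k_n}$ (using that $S^k(T^k_n)$ is $\mathcal{F}^k_{T^k_n}$-measurable) would then deliver $\mathbb{E}[Z^k(\hat\tau)\mid\mathcal{F}^k_{T^k_n}]\le S^k(T^k_n)$ and hence the claim. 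Establishing this $\mathbb{F}$-supermartingale property from the transition kernels of $\mathscr{D}$ is the delicate point, and it is precisely where the construction of $(\mathcal{F}^k_t)_t$ out of $\{\mathcal{A}^k_n\}$ must be brought to bear.
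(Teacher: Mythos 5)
Your outer structure matches the paper's: the trivial inclusion $D^k_{n,T}\subseteq\mathcal{T}_{k,n}$ gives one inequality, and the reduction of an arbitrary $\tau\in\mathcal{T}_{k,n}$ to a grid-valued time via the pure-jump property of $Z^k$ is exactly the paper's first step (the paper uses $Q(\tau)=\min\{T^k_p:T^k_p>\tau\}$ and the class $\widetilde{\mathcal{T}}_{k,n,r}$ of grid-valued times with $\{\eta=T^k_\ell\}\in\mathcal{F}_{T^k_\ell}$, which also sidesteps the fact that your $\hat\tau=\bar\tau_k\wedge T$ is not itself an $\mathbb{F}$-stopping time, so your later appeal to optional sampling at $\hat\tau$ is not directly available). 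But the proof stops precisely where the proposition's content begins. Both of your proposed routes for the remaining inequality --- (a) reading the weights $p_j=\mathbb{P}(\hat\tau=T^k_j\mid\mathcal{F}^k_{T^k_j})$ as a randomised $\mathbb{F}^k$-stopping rule, and (b) upgrading the discrete Snell envelope $(S^k(T^k_j))_j$ from an $\mathbb{F}^k$- to an $\mathbb{F}$-supermartingale along $\mathcal{T}$ --- are stated as goals, and you yourself flag that establishing either one is ``the delicate point.'' That point is the entire theorem: route (a) fails for exactly the reason you give (the $p_j$ are conditioned on different $\sigma$-algebras, need not sum to one, and are not increments of an $\mathbb{F}^k$-adapted increasing process), and route (b) requires showing $\mathbb{E}[S^k(T^k_{j+1})\mid\mathcal{F}_{T^k_j}]=\mathbb{E}[S^k(T^k_{j+1})\mid\mathcal{F}^k_{T^k_j}]$, i.e.\ conditional independence of $\sigma(\mathcal{A}^k_{j+1})$ from $\mathcal{F}_{T^k_j}$ given $\sigma(\mathcal{A}^k_j)$ --- plausible for $d=1$ by the strong Markov property, but genuinely nontrivial for $d>1$ where at each merged grid time the non-hitting coordinates carry overshoot information that is in $\mathcal{F}_{T^k_j}$ but not in $\sigma(\mathcal{A}^k_j)$ and is correlated with the next skeleton increment. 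A proof must confront this, and yours does not.

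For comparison, the paper closes this gap by an entirely different device: Lemma \ref{condformulas} writes both conditional expectations pathwise through the disintegration kernels, $\nu^k_{n,r}$ for $\mathbb{F}^k$-adapted grid times and the additional kernel $H^k_{n,r}$ (the disintegration of $\mathbb{P}\circ J^k_{n,r}$, which carries the extra Brownian information $(\omega,T^k_n,\ldots,T^k_r)$) for $\mathbb{F}$-adapted grid times. Because the reward functional $\gamma^k_{n,r}$ depends only on the actions and the skeleton data $\mathbf{b}^k_r$, the $H^k_{n,r}$-integration is inert, so $\widetilde{G}^k_{n,r}=G^k_{n,r}$, hence $\widetilde{F}^k_{n,r}=F^k_{n,r}$ and the two essential suprema coincide; a countability argument then passes from finite horizon $r$ to $r=\infty$ as required for $D^k_{n,T}$. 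This kernel-level cancellation is the substitute for the supermartingale/randomisation lemma you would need, and without it (or a proof of your conditional-independence claim from the transition structure of $\mathscr{D}$) the argument is incomplete.
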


\subsection{$\epsilon$-optimal stopping times}\label{constructionvalues} The dynamic programming algorithm allows us to define the stopping and continuation regions as follows

$$\textbf{S}(i,k):=\Big\{\textbf{b}^k_{i}\in  \mathbb{S}^{i}_k; \mathbb{Z}^k_i(\textbf{b}^k_i) = \mathbb{V}^k_i(\textbf{b}^k_i) \Big\}\quad \text{(stopping region)}$$

$$\textbf{D}(i,k):=\Big\{\textbf{b}^k_{i}\in  \mathbb{S}^{i}_k; \mathbb{V}^k_i(\textbf{b}^k_i) > \mathbb{Z}^k_i(\textbf{b}^k_i) \Big\}\quad \text{(continuation region)}$$
where $0\le i\le e(k,T)$ and $\mathbb{V}^k_n:\mathbb{S}^n_k\rightarrow\mathbb{R}$ and $\mathbb{Z}^k_n:\mathbb{S}^n_k\rightarrow\mathbb{R}$ are Borel functions which realize

\begin{equation}\label{Zbb}
S^{k}(T^k_n) = \mathbb{V}^k_n(\mathcal{A}^k_n)~a.s\quad \text{and}~Z^k(T^k_n\wedge T) = \mathbb{Z}^k_n(\mathcal{A}^k_n)~a.s; n=0, \ldots, e(k,T).
\end{equation}
Let $Y^k(i):=Z^k(T^k_i\wedge T); i\ge 0.$ and we set $J^{k,m}_n$ as the set of all $(\mathcal{F}^k_{T^k_i})^m_{i=0}$-stopping times taking values on $\{n,n+1, \ldots, m\}$ for a given $0\le n < m < \infty$. Following the classical theory of discrete optimal stopping, the smallest $(\mathcal{F}^k_{T^k_i})^{e(k,T)}_{i=0}$-optimal stopping-time w.r.t the problem

$$\sup_{\tau \in J^{e(k,T)}_0}\mathbb{E} \big[Y^k(\tau)\big]$$
is given by
\begin{equation}\label{bbbb}
\tau^{k}:=\min \Big\{0\le j\le e(k,T); \mathcal{A}^k_{j}\in \textbf{S}(j,k)\Big\}=\min\Big\{0\le j\le e(k,T); S^{k}(T^k_j) = Z^k(T^k_j \wedge T)\Big\}
\end{equation}
which is finite a.s. by construction. Moreover,

\begin{equation}\label{is}
\text{ess}~\sup_{\eta\in J^{k,e(k,T)}_n}\mathbb{E}\Big[Y^k(\eta)|\mathcal{F}^{k}_{T^k_n}\Big] =\text{ess}~\sup_{\tau\in D^{k,e(k,T)}_n}\mathbb{E}\Big[Z^k(\tau\wedge T)|\mathcal{F}^{k}_{T^k_n}\Big]~a.s.
\end{equation}
for each $0\le n \le e(k,T)$. The dynamic programming principle can be written as

\begin{equation}\label{DPST}
\left\{\begin{array}{l}
 \tau^{k}_{e(k,T)}:= e(k,T) \\
\tau^{k}_{j}:= j 1\!\!1_{G^k_j} + \tau^{k}_{j+1}1\!\!1_{(G^{k}_j)^c}; 0\le j \le e(k,T)-1
\end{array}\right.
\end{equation}
where

$$G^k_j: = \Bigg\{\mathbb{Z}^k_j (\mathcal{A}^k_j)\ge \mathbb{E}\Big[\mathbb{Z}^k_{\tau^{k}_{j+1}}(\mathcal{A}^k_{\tau^{k}_{j+1}})\big|\mathcal{A}^k_j\Big]\Bigg\}; 0\le j\le e(k,T)-1$$
and $\tau^{k}=\tau^{k}_0$ a.s. The sequence of functions $\mathbf{U}^k_j:\mathbb{S}^j_k\rightarrow\mathbb{R}$

\begin{equation}\label{cvalues}
\textbf{b}^k_j\mapsto \mathbf{U}^k_j(\textbf{b}^k_j):=\mathbb{E}\Big[\mathbb{Z}^k_{\tau^{k}_{j+1}}(\mathcal{A}^k_{\tau^{k}_{j+1}})\big|\mathcal{A}^k_j=\textbf{b}^k_j\Big]; 0\le j\le e(k,T)-1
\end{equation}
are called \textit{continuation values} and they play a key role in the obtention of the optimal value.

The value functional which gives the best payoff can be reconstructed by means of the dynamic programming principle over the $e(k,T)$-steps which provides

$$
\sup_{\eta\in J^{k,e(k,T)}_0}\mathbb{E}\big[Y^k(\eta)\big] =\max\Big\{\mathbb{Z}^k_0(\textbf{0}); \mathbb{E}\big[\mathbb{V}^k_1 (\mathcal{A}^k_1)\big]\Big\},
$$
where $\mathbb{E}\big[\mathbb{V}^k_1 (\mathcal{A}^k_1)\big] = \mathbb{E}\Big[\mathbb{Z}^k_{\tau^{k}_1}(\mathcal{A}^k_{\tau^{k}_1})\Big]$. Moreover,

\begin{equation}\label{aaaa}
\mathbb{E}\big[Y^k(\tau^k)\big]=\mathbb{E}\big[Z^k(T^k_{\tau^{k}}\wedge T)\big] = \sup_{\tau\in D^k_{0,T}} \mathbb{E}\big[Z^k(\tau \wedge T^k_{e(k,T)})\big]=\sup_{\tau\in \mathcal{T}_{0}(\mathbb{F})}\mathbb{E} \big[Z^k(\tau \wedge T^k_{e(k,T)})\big]
\end{equation}
where the last fundamental equality in identity (\ref{aaaa}) is due to Proposition \ref{equality}. Let us denote

\begin{equation}\label{valuef}
V^k_0:= \sup_{\tau\in D^k_{0,T}}\mathbb{E}\big[Z^k(\tau\wedge T^k_{e(k,T)})\big]; k\ge 1.
\end{equation}
We are now able to state the main results of this article. The proof of Theorem \ref{mainTh} is postponed to Section \ref{sectionproof}.

\begin{theorem}\label{mainTh}
Let $S$ be the Snell envelope associated to a reward process $Z$ satisfying assumptions \textbf{(A1-A2)}. Let $\{Z^k; k\ge 1\}$ be a sequence of pure jump processes of the form (\ref{purejumpmodel}) and let $\{S^k; k\ge 1\}$ be the associated value process given by (\ref{discretevaluep}). If $\mathcal{Z} = \big( (Z^k)_{k\ge 1},\mathscr{D}\big)$ is an imbedded discrete structure for $Z$ where (\ref{scdef}) holds for $p>1$, then $\mathcal{S} = \big((S^k)_{k\ge 1},\mathscr{D}\big)$ is an imbedded discrete structure for $S$, where

$$\lim_{k\rightarrow+\infty}\mathbb{E}\sup_{0\le t\le T}|S^k(t) - S(t)|=0.$$
Moreover, $ \{S^k; k\ge 1\}$ is the unique sequence of pure jump processes of the form (\ref{purejumpmodel}) which satisfies the following variational inequality

\begin{eqnarray}\label{varineq}
\max \Big\{ U^{\mathcal{Y},k,\textbf{p}}S(T^k_i);  Z^k(T^k_i\wedge T) - S^{k}(T^k_i) \Big\} & = & 0\quad i=e(k,T)-1, \ldots, 0,~a.s. \\
\nonumber S^{k} (T^k_{e(k,T)}) &=&Z^k (T^k_{e(k,T)}\wedge T)~a.s.
\end{eqnarray}
\end{theorem}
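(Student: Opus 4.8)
The plan is to separate the two assertions: the variational characterisation, which is algebraic, and the convergence $S^k\to S$, which is analytic. I would dispatch the former first. By the classical theory of finite-horizon discrete optimal stopping, the value process (\ref{discretevaluep}) obeys the backward recursion $S^k(T^k_n)=\max\{Z^k(T^k_n\wedge T),\mathbb{E}[S^k(T^k_{n+1})\mid\mathcal{F}^k_{T^k_n}]\}$ for $0\le n\le e(k,T)-1$, with $S^k(T^k_{e(k,T)})=Z^k(T^k_{e(k,T)}\wedge T)$. Since $\mathbb{E}[S^k(T^k_{n+1})\mid\mathcal{F}^k_{T^k_n}]-S^k(T^k_n)=\epsilon_k^2\,U^{\mathcal{Y},k,\textbf{p}}S(T^k_n)$, this recursion says exactly that $U^{\mathcal{Y},k,\textbf{p}}S(T^k_n)\le 0$, that $Z^k(T^k_n\wedge T)-S^k(T^k_n)\le 0$, and that at least one of the two is null, which is precisely (\ref{varineq}). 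Uniqueness follows at once: any pure jump process of the form (\ref{purejumpmodel}) satisfying (\ref{varineq}) obeys the same recursion and the same terminal value, and a backward induction from $n=e(k,T)$ down to $n=0$ pins down its grid values, hence the whole process.

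For the convergence I would first use Proposition \ref{equality} to rewrite each grid value as a Brownian optimisation conditioned on the coarse filtration. Modulo a horizon error that vanishes by Lemma \ref{ratetenk} (passing from the finite family $D^{k,e(k,T)}_n$ to $D^k_{n,T}$, the extra stopping opportunities living in the shrinking window $[T^k_{e(k,T)}\wedge T,T]$), Proposition \ref{equality} gives $S^k(T^k_n)=\esssup_{\tau\in\mathcal{T}_{k,n}}\mathbb{E}[Z^k(\tau)\mid\mathcal{F}^k_{T^k_n}]$. Setting $\bar S^k(T^k_n):=\esssup_{\tau\in\mathcal{T}_{k,n}}\mathbb{E}[Z(\tau)\mid\mathcal{F}^k_{T^k_n}]$, the elementary bound $|\esssup_\tau f_\tau-\esssup_\tau g_\tau|\le\esssup_\tau|f_\tau-g_\tau|$ yields $|S^k(T^k_n)-\bar S^k(T^k_n)|\le\mathbb{E}[\|Z^k-Z\|_\infty\mid\mathcal{F}^k_{T^k_n}]$. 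Because $n\mapsto\mathbb{E}[\|Z^k-Z\|_\infty\mid\mathcal{F}^k_{T^k_n}]$ is a discrete martingale, Doob's maximal inequality, which is where the hypothesis $p>1$ enters, gives $\mathbb{E}\sup_n|S^k(T^k_n)-\bar S^k(T^k_n)|\le C_p\|Z^k-Z\|_{\mathbf{B}^p}\to 0$.

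The term $\bar S^k(T^k_n)-S(T^k_n)$ is the filtration discrepancy, and controlling it is the step I expect to be the main obstacle. The useful observation is the identity $\bar S^k(T^k_n)=\mathbb{E}[S(T^k_n)\mid\mathcal{F}^k_{T^k_n}]$: the inequality ``$\le$'' is the tower property together with $\mathbb{E}[Z(\tau)\mid\mathcal{F}_{T^k_n}]\le S(T^k_n)$, while ``$\ge$'' comes from inserting the optimal Brownian stopping time $\tau^\ast=\inf\{s\ge T^k_n:S(s)=Z(s)\}$, which is admissible and optimal because the Snell envelope of a continuous reward in the Brownian filtration is continuous and of class D. It then remains to prove $\mathbb{E}\sup_{0\le n\le e(k,T)}\big|\mathbb{E}[S(T^k_n)\mid\mathcal{F}^k_{T^k_n}]-S(T^k_n)\big|\to0$, i.e.\ that projecting the fixed continuous process $S\in\mathbf{B}^p$ onto the discretised filtration is asymptotically the identity, uniformly along the grid. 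This is the genuinely nontrivial analytic input: it cannot be read off from the pathwise convergence of the skeleton alone and must be extracted from the strong stability of $\mathscr{D}$ and the convergence $\mathcal{F}^k_t\to\mathcal{F}_t$ established in \cite{LEAO_OHASHI2017.1}.

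Finally I would assemble the estimate $\mathbb{E}\sup_{0\le t\le T}|S^k(t)-S(t)|\le\mathbb{E}\sup_n|S^k(T^k_n)-S(T^k_n)|+\mathbb{E}\sup_{0\le t\le T}|S(\bar{t}_k)-S(t)|$. The first term tends to $0$ by the previous two paragraphs; the between-grid term tends to $0$ because $S$ is uniformly continuous on $[0,T]$, the mesh $\max_n\Delta T^k_n\to0$, and $\|S\|_{\mathbf{B}^p}<\infty$ supplies the uniform integrability of the modulus of continuity (the freezing of $S^k$ on $[T^k_{e(k,T)},T]$ being once more absorbed by Lemma \ref{ratetenk}). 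This gives $\lim_k\mathbb{E}\sup_{0\le t\le T}|S^k(t)-S(t)|=0$. Combined with the pure-jump form (\ref{purejumpmodel}) built into (\ref{discretevaluep}) and the integrability of $[S^k,S^k](T)$, a routine consequence of the integrability of $Z^k$, this shows that $\mathcal{S}=((S^k)_{k\ge1},\mathscr{D})$ is an imbedded discrete structure for $S$.
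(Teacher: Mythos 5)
Your proposal is correct and follows essentially the same route as the paper: the variational inequality and uniqueness via the classical backward recursion, Proposition \ref{equality} plus Doob's maximal inequality (the role of $p>1$) to compare $S^k$ with $\esssup_{\tau\in\mathcal{T}_{k,n}}\mathbb{E}[Z(\tau)\mid\mathcal{F}^k_{T^k_n}]$, and the identification of the latter with $\mathbb{E}[S(T^k_n)\mid\mathcal{F}^k_{T^k_n}]$ (the paper invokes the lattice property where you insert the optimal stopping time, which is equivalent). The step you single out as the nontrivial analytic input, $\mathbb{E}\sup_n|\mathbb{E}[S(T^k_n)\mid\mathcal{F}^k_{T^k_n}]-S(T^k_n)|\to 0$, is exactly what the paper also does not reprove but imports from Lemma 3.1 of \cite{LEAO_OHASHI2017.1} together with uniform integrability from \textbf{(A2)}.
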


\begin{theorem}\label{optSTres}
Let $\mathcal{Z} = \big((Z^k)_{k\ge 1},\mathscr{D}\big)$ be an imbedded discrete structure for the reward process $Z$ and let $\tau^k$ be the optimal stopping time given by (\ref{bbbb}). Then, $T^k_{\tau^{k}}\wedge T$ is an $\epsilon$-optimal stopping time in the Brownian filtration, i.e., for a given $\epsilon>0$,

$$\sup_{\eta\in \mathcal{T}_0(\mathbb{F})}\mathbb{E}\big[Z(\eta)\big] -\epsilon< \mathbb{E}\big[Z(T^k_{\tau^k}\wedge T)\big]$$
for every $k$ sufficiently large. Moreover,

\begin{equation}\label{abcov}
\Big|\sup_{\tau\in \mathcal{T}_0(\mathbb{F})}\mathbb{E}\big[Z(\tau)\big] -  V^k_0\Big|\le \mathbb{E}\|Z^k(\cdot\wedge T^k_{e(k,T)}) - Z\|_\infty\rightarrow 0
\end{equation}
as $k\rightarrow+\infty$.
\end{theorem}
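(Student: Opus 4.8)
The plan is to establish the value convergence (the ``moreover'' part) first, and then deduce $\epsilon$-optimality from it. The starting point is the chain of identities (\ref{aaaa}), which gives
$$V^k_0 = \mathbb{E}\big[Z^k(T^k_{\tau^k}\wedge T)\big] = \sup_{\tau\in\mathcal{T}_0(\mathbb{F})}\mathbb{E}\big[Z^k(\tau\wedge T^k_{e(k,T)})\big],$$
together with the fact that, since $\mathcal{F}_0$ is $\mathbb{P}$-trivial, the Snell envelope at the origin is the deterministic number $S(0)=\sup_{\tau\in\mathcal{T}_0(\mathbb{F})}\mathbb{E}[Z(\tau)]$. Writing $\tilde Z^k(t):=Z^k(t\wedge T^k_{e(k,T)})$, both optimal values are suprema over the same index set $\mathcal{T}_0(\mathbb{F})$, so the elementary bound $|\sup_a f(a)-\sup_a g(a)|\le\sup_a|f(a)-g(a)|$ yields
$$\big|S(0)-V^k_0\big|\le\sup_{\tau\in\mathcal{T}_0(\mathbb{F})}\mathbb{E}\big|Z(\tau)-\tilde Z^k(\tau)\big|\le\mathbb{E}\big\|Z-\tilde Z^k\big\|_\infty,$$
which is exactly the bound in (\ref{abcov}); the last step uses $0\le\tau\le T$, so that $|Z(\tau)-\tilde Z^k(\tau)|\le\|Z-\tilde Z^k\|_\infty$ pathwise.

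It remains to show $\mathbb{E}\|Z-\tilde Z^k\|_\infty\to 0$, which is the main technical step. I would split it by the triangle inequality as
$$\|Z-\tilde Z^k\|_\infty\le\big\|Z(\cdot\wedge T^k_{e(k,T)})-Z\big\|_\infty+\sup_{0\le t\le T}\big|Z(t\wedge T^k_{e(k,T)})-Z^k(t\wedge T^k_{e(k,T)})\big|.$$
The second term is bounded by $\|Z^k-Z\|_\infty$ (because $t\wedge T^k_{e(k,T)}\le T$), whose $L^1$-norm vanishes since $\mathcal{Z}$ is an imbedded discrete structure and $p\ge 1$. For the first term, on the event $\{T^k_{e(k,T)}\ge T\}$ it is zero, while on $\{T^k_{e(k,T)}< T\}$ it equals $\sup_{T^k_{e(k,T)}< t\le T}|Z(T^k_{e(k,T)})-Z(t)|$, which tends to $0$ almost surely by Lemma \ref{ratetenk} (so that $T^k_{e(k,T)}\to T$ a.s.) together with the uniform continuity of the continuous path $Z$ on $[0,T]$. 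To upgrade this pathwise convergence to $L^1$, I would dominate the first term by $2\|Z\|_\infty\in L^1$ using assumption \textbf{(A1)} and invoke dominated convergence. Combining the two terms gives $\mathbb{E}\|Z-\tilde Z^k\|_\infty\to 0$, and hence $V^k_0\to S(0)$.

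Finally, for the $\epsilon$-optimality of $T^k_{\tau^k}\wedge T$, I would evaluate $Z$ and $Z^k$ at the common random time $T^k_{\tau^k}\wedge T\le T$ and write
$$\mathbb{E}\big[Z(T^k_{\tau^k}\wedge T)\big]=V^k_0+\mathbb{E}\big[Z(T^k_{\tau^k}\wedge T)-Z^k(T^k_{\tau^k}\wedge T)\big]\ge V^k_0-\mathbb{E}\|Z-Z^k\|_\infty.$$
Since $V^k_0\to S(0)=\sup_{\eta\in\mathcal{T}_0(\mathbb{F})}\mathbb{E}[Z(\eta)]$ and $\mathbb{E}\|Z-Z^k\|_\infty\to 0$, the right-hand side converges to $S(0)$, so for every $\epsilon>0$ and all $k$ sufficiently large one has $\mathbb{E}[Z(T^k_{\tau^k}\wedge T)]>S(0)-\epsilon$, as claimed. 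I expect the main obstacle to be the $L^1$ convergence $\mathbb{E}\|Z(\cdot\wedge T^k_{e(k,T)})-Z\|_\infty\to 0$: the pathwise statement is immediate from continuity and Lemma \ref{ratetenk}, but passing to the $L^1$ level requires the moment control of \textbf{(A1)} and a dominated convergence argument, and this is where the care lies.
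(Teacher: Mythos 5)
Your treatment of the value convergence and of the inequality is correct and follows essentially the same route as the paper: both arguments rest on the identities (\ref{aaaa}), on the bound $|\sup_\tau \mathbb{E}[Z^k(\tau\wedge T^k_{e(k,T)})]-\sup_\tau\mathbb{E}[Z(\tau)]|\le\mathbb{E}\|Z^k(\cdot\wedge T^k_{e(k,T)})-Z\|_\infty$, and on the convergence $\mathbb{E}\|Z^k(\cdot\wedge T^k_{e(k,T)})-Z\|_\infty\to 0$ obtained from the imbedded structure property, path-continuity of $Z$ and Lemma \ref{ratetenk}. In fact you supply slightly more detail than the paper on the last point (the split on $\{T^k_{e(k,T)}\ge T\}$ versus its complement and the dominated convergence step using \textbf{(A1)}), and your derivation of the $\epsilon$-optimality inequality by evaluating $Z$ and $Z^k$ at the common random time $T^k_{\tau^k}\wedge T$ is a clean equivalent of the paper's two-step estimate.

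There is, however, one genuine omission. The theorem asserts that $T^k_{\tau^k}\wedge T$ is an $\epsilon$-optimal \emph{stopping time in the Brownian filtration}, so part of the claim is that $T^k_{\tau^k}\wedge T\in\mathcal{T}_0(\mathbb{F})$, i.e.\ that the random time $T^k_{\tau^k}$ is an $\mathbb{F}$-stopping time. Your proposal never addresses this: $\tau^k$ is a stopping time only for the discrete-time filtration $(\mathcal{F}^k_{T^k_j})_{j=0}^{e(k,T)}$, and the passage from that to the statement that $T^k_{\tau^k}$ is a stopping time of the (augmented) Brownian filtration requires an argument. The paper devotes the final third of its proof to exactly this point: it verifies that $T^k_{\tau^k(\omega)}(\omega)\le t$ together with $\omega|_{[0,t]}=\omega'|_{[0,t]}$ forces $T^k_{\tau^k(\omega)}(\omega)=T^k_{\tau^k(\omega')}(\omega')$ (because the random walks $A^{k,j}$ only jump at the times $T^k_n$, all the data entering $\tau^k$ is determined by the path up to time $t$), and then invokes Galmarino's test to conclude that $T^k_{\tau^k}$ is a stopping time of the raw Brownian filtration, hence of $\mathbb{F}$. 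Alternatively one can argue directly that $\{T^k_{\tau^k}\le t\}=\bigcup_{n}\{\tau^k=n\}\cap\{T^k_n\le t\}\in\mathcal{F}_t$ since $\{\tau^k=n\}\in\mathcal{F}^k_{T^k_n}\subset\mathcal{F}_{T^k_n}$; either way, some such step must be included for the proof to cover the full statement.
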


\begin{proof}
The imbedded discrete structure property, the path-continuity of $Z$ and Lemma \ref{ratetenk} yield
$$
\mathbb{E}\| Z^k(\cdot \wedge T^k_{e(k,T)}) - Z\|_\infty\rightarrow 0
$$
as $k\rightarrow +\infty$. This shows that

\begin{equation}\label{caudacov}
\Big|\sup_{\tau\in \mathcal{T}_0(\mathbb{F})}\mathbb{E}\big[Z^k(\tau \wedge T^k_{e(k,T)})\big] - \sup_{\tau\in \mathcal{T}_0(\mathbb{F})} \mathbb{E}\big[Z(\tau)\big]\Big|\le\mathbb{E}\sup_{0\le t\le T}|Z^k(t\wedge T^k_{e(k,T)}) - Z(t)|\rightarrow 0
\end{equation}
as $k\rightarrow +\infty$. By (\ref{aaaa}) (see also Proposition \ref{equality}), (\ref{valuef}) and (\ref{is}), we conclude (\ref{abcov}). Now, by using (\ref{aaaa}) and (\ref{caudacov}), for a given $\epsilon>0$, we have

\begin{equation}\label{epotimokgrande}
\sup_{\tau\in \mathcal{T}_0(\mathbb{F})}\mathbb{E}\big[Z(\tau)\big] - \epsilon< \sup_{\tau\in \mathcal{T}_0(\mathbb{F})} \mathbb{E}\big[Z^k(\tau \wedge T^k_{e(k,T)} \wedge T)\big] = \mathbb{E}\big[Z^k(T^k_{\tau^{k}}\wedge T)\big]
\end{equation}
for every $k$ sufficiently large. The imbedded discrete structure property of $Z^k$ yields $\mathbb{E}\big[Z^k(T^k_{\tau^{k}}\wedge T)\big] <\mathbb{E}\big[Z(T^k_{\tau^{k}}\wedge T)\big] + \epsilon  $ for every $k$ sufficiently large. This implies $\sup_{\tau\in\mathcal{T}_0(\mathbb{F})}\mathbb{E}\big[Z(\tau)\big] - 2\epsilon < \mathbb{E}\big[Z(T^k_{\tau^{k}}\wedge T)\big]$ for every $k$ sufficiently large. It remains to show $T^k_{\tau^k}$ is an $\mathbb{F}$-stopping time. Clearly,  $T^k_{\tau^k}:\Omega\rightarrow \mathbb{R}_+$ is $\mathcal{F}$-measurable. We claim that

$$T^k_{\tau^k(\omega)}(\omega)\le t~\text{and}~\omega|_{[0,t]} = \omega'|_{[0,t]}~\text{then}~T^k_{\tau^k(\omega)}(\omega) = T^k_{\tau^k(\omega')}(\omega').$$
 Recall $\tau^{k}(\omega)=\min \Big\{0\le j\le e(k,T); \mathcal{A}^k_{j}(\omega)\in \textbf{S}(j,k)\Big\}$ and we notice that $T^k_{\tau^k(\omega)}(\omega)\le t$ means that all information that we need to compute $T^k_{\tau^k(\omega)}(\omega)$ lies on the Brownian path $\omega|_{[0,t]}$. This can be easily seen by the fact that each continuous time random walk $A^{k,j}; j=1, \ldots, d$ only (possibly) jumps at the stopping times $(T^k_n)_{n\ge 0}$. In this case, if $\omega|_{[0,t]} = \omega'|_{[0,t]}$ and $T^k_{\tau^k(\omega)}(\omega)\le t$ , then we necessarily have $T^k_{\tau^k(\omega)}(\omega) = T^k_{\tau^k(\omega')}(\omega')$. From Galmarino's test (see \cite{dellacherie} Nos. IV 94-103, pp 145-152), we conclude that $T^k_{\tau^k}$ is an $(\widetilde{\mathcal{F}}_t)_{t\ge 0}$-stopping time, where $(\widetilde{\mathcal{F}}_t)_{t\ge 0}$ is the raw filtration generated by the Brownian motion on the Wiener space. This shows that $T^k_{\tau^k}\wedge T$ is an $\mathbb{F}$-stopping time and (\ref{epotimokgrande}) allows us to conclude the proof.
\end{proof}
\begin{remark}
The importance of imbedded discrete structures is already apparent in Theorem \ref{optSTres}: Maximization along the discrete type filtration $(\mathcal{F}^k_{T^k_n})_{n=0}^{e(k,T)}$ is essentially equivalent to maximization along the Brownian filtration for $k$ large enough. More importantly, the imbedded structure allows us to reduce dimensionality of the optimal stopping problem (see Section \ref{appliedsec} and Example \ref{EXAMPLE_PAPER}).
\end{remark}

\section{Proofs of Proposition \ref{equality} and Theorem~\ref{mainTh}}\label{sectionproof}

In order to prove convergence, we need a subtle pathwise argument on the conditional expectations involved in the optimization problem. For a given pure jump process of the form (\ref{purejumpmodel}), there exists a list of Borel functions $h^k_\ell:\mathbb{S}_k^\ell\rightarrow \mathbb{R}$ which realizes

$$Z^k(t) = \sum_{\ell=0}^{\infty} h^k_\ell(\mathcal{A}^k_\ell)\mathds{1}_{\{T^k_\ell \le t < T^k_{\ell+1}\}}~a.s.; 0\le t\le T.$$
In other words, $\mathbb{Z}^k_n(\mathcal{A}^k_n) = Z^k(T^k_n\wedge T)$~a.s., where

$$
\mathbb{Z}^k_n(\textbf{b}^k_n)=\left\{
\begin{array}{rl}
h^k_n(\textbf{b}^k_n); & \hbox{if} \ t^k_n\le T \\
h^k_j(\textbf{b}^k_j);& \hbox{if} \ T < t^k_n, t^k_j\le T < t^k_{j+1} \\
\end{array}
\right.
$$
for $n\ge 0$. In the sequel, we fix two natural numbers $0\le n \le r$. For a given $\tau\in D^{k,r}_n$, there exists a list of Borel functions $g^k_\ell:\mathbb{S}_k^\ell\rightarrow \mathbb{R}$ which realizes

\begin{equation}\label{controlrep}
\tau = \sum_{\ell=n}^r T^k_\ell g^k_\ell(\mathcal{A}^k_\ell)~a.s
\end{equation}
where $g^k_\ell(\mathcal{A}^k_\ell) = \mathds{1}_{\{\tau = T^k_\ell\}}; n\le \ell \le r$. The action space in our optimization problem is given by

$$\mathbb{A}^j:=\Big\{x = (x_1,\ldots, x_j)\in \mathbb{N}^j; \sum_{\ell=1}^j x_\ell = 1\Big\}; j>1.$$
The elements of $\mathbb{A}^{r-n+1}\times \mathbb{S}^r_k$ will be denoted by

$$\textbf{o}^{k,n,r}:=\big(a^k_n, \ldots, a^k_r, \textbf{b}^k_r\big).$$

For a given list of Borel functions $g^k_\ell:\mathbb{S}_k^\ell\rightarrow \{0,1\}$ realizing (\ref{controlrep}), we define the map $\Xi^{k,g^k}_{n,r}:\mathbb{S}^r_k\rightarrow \mathbb{A}^{r-n +1}\times \mathbb{S}^r_k$ given by

$$\Xi^{k,g^k}_{n,r}(\textbf{b}^k_r):= \Big( g^k_n(\textbf{b}^k_n), \ldots, g^k_r(\textbf{b}^k_r), \textbf{b}^k_r\Big);~\textbf{b}^k_r\in \mathbb{S}^r_k.$$
We observe that we may choose $(g^k_\ell)_{\ell=n}^r$ in such way that $(g^k_n(\textbf{b}^k_n), \ldots, g^k_r(\textbf{b}^k_r))\in\mathbb{A}^{r-n+1}$ for every $\textbf{b}^k_r\in \mathbb{S}^r_k$. Moreover, there exists an explicit map $\gamma^k_{n,r}:\mathbb{A}^{r-n+1}\times \mathbb{S}^r_k\rightarrow\mathbb{R}$ such that

\begin{equation}\label{prepZk}
Z^k(\tau \wedge T) = \gamma^k_{n,r}\Big( \Xi^{k,g^k}_{n,r}\big(\mathcal{A}^k_r\big) \Big)~a.s.,
\end{equation}
where

$$
\gamma^k_{n,r}(\textbf{o}^{k,n,r}):= \sum_{\ell=n}^r h^k_\ell(\textbf{b}^k_\ell)\mathds{1}_{\{1\}}(a^k_\ell)\mathds{1}_{\{t^k_\ell \le T\}}(\textbf{b}^k_\ell) +\sum_{\ell=n}^r  h^k_\ell(\textbf{b}^k_\ell)\mathds{1}_{\{t^k_\ell\le T < t^k_{\ell+1}\}}\Bigg(\sum_{j=n}^r\mathds{1}_{\{1\}}(a^k_j)\mathds{1}_{\{t^k_j > T\}}(\textbf{b}^k_j)\Bigg).
$$

We should also compute a pathwise representation of $Z^k(\tau\wedge T)$ but for a slightly more general class of stopping times. Let $\widetilde{\mathcal{T}}_{k,n,r}$ be the set of all $\mathbb{F}$-stopping times of the form

$$\eta = \sum_{\ell=n}^r T^k_{\ell}\mathds{1}_{\{\eta = T^k_\ell\}}$$
where $\{\eta = T^k_\ell\}\in \mathcal{F}_{T^k_\ell}; n\le \ell\le r$. Of course, $D^{k,r}_n\subset \widetilde{\mathcal{T}}_{k,n,r}$ for every $0\le n \le r $ and $k\ge 1$. Recall that $T^k_n < \infty$ a.s for every $n\ge 0$. In this case, it is known (see e.g. Corollary 3.22 in \cite{he}) that $(\Phi^k_j)^{-1}\big(\mathcal{O}\big) = \mathcal{F}_{T^k_j}$, where $\mathcal{O}$ is the optional $\sigma$-algebra on $\Omega\times \mathbb{R}_+$ and

$$\Phi^k_j(\omega): = \big(\omega, T^k_j(\omega)\big); \omega\in \Omega^*,~j\ge 1,$$
where $\mathbb{P}(\Omega^*)=1$. To keep notation simple, we choose a version of $\Phi^k_j$ defined everywhere and with a slight abuse of notation we write it as $\Phi^k_j$. Based on this fact, for a given $\eta\in\widetilde{\mathcal{T}}_{k,n,r} $ there exists a list a Borel functions $\varphi^k_\ell:\Omega\times\mathbb{R}_+\rightarrow \{0,1\}$ which realizes

\begin{equation}\label{controlrepBM}
\eta = \sum_{\ell=n}^r T^k_\ell \varphi^k_\ell (\Phi^k_\ell)~a.s.
\end{equation}
For a list of Borel functions $\varphi^k_\ell:\Omega\times\mathbb{R}_+\rightarrow\{0,1\}$ realizing (\ref{controlrepBM}), we then define the map $\Xi^{k,\varphi^k}_{n,r}:\Omega\times \mathbb{R}^{r-n+1}_+\rightarrow\mathbb{A}^{r-n+1}\times \mathbb{S}^r_k$ given by

$$\Xi^{k,\varphi^k}_{n,r}(\omega, x_n, \ldots, x_r,\textbf{b}^k_r):= \Big( \varphi^k_n\big(\Phi^k_n(\omega,x_n)\big), \ldots, \varphi^k_r\big(\Phi^k_r(\omega,x_r)\big), \textbf{b}^k_r\Big),$$
for $\omega\in \Omega, (x_n, \ldots, x_r)\in \mathbb{R}^{r-n+1}_+$ and $\textbf{b}^k_r\in \mathbb{S}^r_k$. By construction, we have

\begin{equation}\label{prepZkBM}
Z^k(\eta\wedge T) = \gamma^k_{n,r}\Big(\Xi^{k,\varphi^k}_{n,r}\big( J^k_{n,r}\big)  \Big)~a.s.,
\end{equation}
where $J^k_{n,r}:=\big(\text{Id}, T^k_n, \ldots, T^k_r, \mathcal{A}^k_r\big)$ and $\text{Id}:\Omega\rightarrow\Omega$ is the identity map.

In the sequel, $H^k_{n,r}:\mathcal{B}(\Omega\times \mathbb{R}^{r-n+1}_{+})\times\mathbb{S}^r_k\rightarrow[0,1]$ is the disintegration of $\mathbb{P}\circ J^k_{n,r}$ w.r.t $\mathbb{P}^k_r$ and $\nu^k_{n,r}$ is the disintegration of $\mathbb{P}^k_r$ w.r.t $\mathbb{P}^k_n$ for $r > n\ge 1$. Recall the notation introduced in Section \ref{preliminaries}.


\begin{lemma}\label{condformulas}
Let $Z^k$ be a pure jump process of the form (\ref{purejumpmodel}). For each $\tau\in D^{k,r}_n$ and $\eta\in \widetilde{\mathcal{T}}_{k,n,r}$, for $0\le n < r$, we have

$$
\mathbb{E}\Big[Z^k(\tau \wedge T)|\mathcal{F}^k_{T^k_n}\Big] =\int_{\mathbb{S}^j_k}\gamma^k_{n,r}\Big(\Xi^{k,g^k}_{n,r}\big(\mathcal{A}^k_n, q^k_{n,r}\big)\Big)\nu^k_{n,r}(dq^k_{n,r}|\mathcal{A}^k_n)~a.s
$$
and

$$
\mathbb{E}\Big[Z^k(\eta \wedge T)|\mathcal{F}^k_{T^k_n}\Big]=\int_{\mathbb{S}^j_k}\int_{\Omega\times \mathbb{R}^j_{+}}\gamma^k_{n,r}\Big(\Xi^{k,\varphi^k}_{n,r}\big(\omega,x_n,\ldots,x_r,\mathcal{A}^k_n, q^k_{n,r}\big)\Big)H^k_{n,r}(d\omega dx_n\ldots dx_r|\mathcal{A}^k_n, q^k_{n,r})\nu^k_{n,r}(dq^k_{n,r}|\mathcal{A}^k_n),
$$
a.s for $j=r-n$, where $(g^k_\ell)_{\ell=n}^r$ and $(\varphi^k_\ell)_{\ell=n}^r$ are Borel functions associated to $\tau$ and $\eta$, respectively.
\end{lemma}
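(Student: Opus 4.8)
The plan is to reduce both identities to the two-stage disintegration structure set up in Section \ref{preliminaries}, exploiting the fact that conditioning on $\mathcal{F}^k_{T^k_n}$ amounts to conditioning on the random vector $\mathcal{A}^k_n$. Since $\mathcal{F}^k_{T^k_n} = (\mathcal{A}^k_n)^{-1}(\mathcal{B}(\mathbb{S}^n_k))$ up to $\mathbb{P}$-null sets, a version of any $\mathcal{F}^k_{T^k_n}$-conditional expectation is produced by integrating out everything that is not measurable w.r.t. $\mathcal{A}^k_n$ against the appropriate regular conditional kernel.

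First I would treat the $\mathbb{F}^k$-adapted case $\tau\in D^{k,r}_n$. By the representation (\ref{prepZk}), the variable $Z^k(\tau\wedge T)$ equals $\psi(\mathcal{A}^k_r)$ a.s. for the Borel map $\psi := \gamma^k_{n,r}\circ\Xi^{k,g^k}_{n,r}$ on $\mathbb{S}^r_k$. Writing $\mathcal{A}^k_r = (\mathcal{A}^k_n, q^k_{n,r})$ in the split coordinates of Section \ref{preliminaries} and invoking the disintegration identity defining $\nu^k_{n,r}$ (the displayed formula expressing $\mathbb{P}^k_r$ via $\nu^k_{n,r}$ and $\mathbb{P}^k_n$), the candidate $W := \int_{\mathbb{S}^{r-n}_k}\psi(\mathcal{A}^k_n, q^k_{n,r})\,\nu^k_{n,r}(dq^k_{n,r}\,|\,\mathcal{A}^k_n)$ is $\sigma(\mathcal{A}^k_n)$-measurable. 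I would then check the defining property of conditional expectation: for $F = (\mathcal{A}^k_n)^{-1}(E)$ with $E\in\mathcal{B}(\mathbb{S}^n_k)$, applying that same identity to the set $D = E\times\mathbb{S}^{r-n}_k$ and the integrand $\psi$ gives $\mathbb{E}[W\mathbf{1}_F] = \mathbb{E}[\psi(\mathcal{A}^k_r)\mathbf{1}_F] = \mathbb{E}[Z^k(\tau\wedge T)\mathbf{1}_F]$. Hence $W$ is a version of $\mathbb{E}[Z^k(\tau\wedge T)\,|\,\mathcal{F}^k_{T^k_n}]$, which is the first formula.

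For the general $\mathbb{F}$-stopping time $\eta\in\widetilde{\mathcal{T}}_{k,n,r}$ the subtlety is that $Z^k(\eta\wedge T)$ is genuinely Brownian-measurable: the decision events $\{\eta = T^k_\ell\}$ lie in $\mathcal{F}_{T^k_\ell}$ and not, in general, in the coarser $\mathcal{F}^k_{T^k_\ell}$. The representation (\ref{prepZkBM}), made possible by the optional $\sigma$-algebra identity $(\Phi^k_j)^{-1}(\mathcal{O}) = \mathcal{F}_{T^k_j}$, writes $Z^k(\eta\wedge T) = \Psi(J^k_{n,r})$ with $\Psi := \gamma^k_{n,r}\circ\Xi^{k,\varphi^k}_{n,r}$ and $J^k_{n,r} = (\mathrm{Id}, T^k_n, \ldots, T^k_r, \mathcal{A}^k_r)$. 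I would condition in two stages, using the tower property with the intermediate $\sigma$-field $\mathcal{F}^k_{T^k_r} = \sigma(\mathcal{A}^k_r)$ (up to null sets), which contains $\mathcal{F}^k_{T^k_n}$. Since the $\mathbb{S}^r_k$-marginal of the law of $J^k_{n,r}$ is exactly $\mathbb{P}^k_r$, the kernel $H^k_{n,r}$ is the regular conditional distribution of the Brownian coordinate $(\omega, T^k_n, \ldots, T^k_r)$ given $\mathcal{A}^k_r$; thus the inner conditional expectation $\mathbb{E}[\Psi(J^k_{n,r})\,|\,\sigma(\mathcal{A}^k_r)]$ equals $G(\mathcal{A}^k_r)$, where $G$ denotes the integral of $\Psi$ against $H^k_{n,r}(\cdot\,|\,\cdot)$. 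Applying the $\nu^k_{n,r}$-disintegration from the first case to the Borel functional $G$ of $\mathcal{A}^k_r$ then integrates out $q^k_{n,r}$ and produces the iterated integral in the statement.

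The main obstacle is the measure-theoretic bookkeeping in this last step: one must verify that the two kernels compose correctly, i.e. that integration against $H^k_{n,r}$ legitimately realizes $\mathbb{E}[\cdot\,|\,\sigma(\mathcal{A}^k_r)]$ and that substituting $\mathcal{A}^k_r = (\mathcal{A}^k_n, q^k_{n,r})$ is compatible with the outer $\nu^k_{n,r}$-integration. This is essentially a Fubini/tower argument, justified by checking the conditional-expectation defining property against generators $F\in\sigma(\mathcal{A}^k_n)$ exactly as in the $\mathbb{F}^k$-adapted case, now carrying the extra Brownian integration along. The remaining points, namely the Borel measurability of $\psi$, $\Psi$ and $G$ and the harmlessness of the $\mathbb{P}$-augmentation when identifying $\mathcal{F}^k_{T^k_n}$ with $\sigma(\mathcal{A}^k_n)$, are routine.
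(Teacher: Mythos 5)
Your argument is correct and is essentially the paper's own proof: the paper simply states that the lemma "is a direct consequence of the representations (\ref{prepZk}) and (\ref{prepZkBM})" and omits the details, and the details you supply — checking the defining property of conditional expectation against generators of $\sigma(\mathcal{A}^k_n)$ via the $\nu^k_{n,r}$-disintegration, and the tower argument through $\sigma(\mathcal{A}^k_r)$ using that $H^k_{n,r}$ disintegrates $\mathbb{P}\circ J^k_{n,r}$ over $\mathbb{P}^k_r$ — are exactly the intended ones.
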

\begin{proof}
The proof is a direct consequence of the representations (\ref{prepZk}) and (\ref{prepZkBM}), so we omit the details.
\end{proof}

We are now able to prove Proposition \ref{equality} which is the main ingredient in the proof of Theorem \ref{mainTh} and the estimate (\ref{abcov}) in Theorem \ref{optSTres}.

\

\textbf{Proof of Proposition \ref{equality}}. For the moment, let us fix $r > n$. We claim

\begin{equation}\label{rest1}
\esssup_{\tau\in \widetilde{\mathcal{T}}_{k,n,r} }\mathbb{E}\Big[Z^k\big(\tau\wedge T\big)|\mathcal{F}^k_{T^k_n}\Big] = \esssup_{\tau\in \overline{\mathcal{T}}_{k,n,r}}\mathbb{E}\Big[Z^k\big(\tau\wedge T\big)|\mathcal{F}^k_{T^k_n}\Big]~a.s.,
\end{equation}
where $\overline{\mathcal{T}}_{k,n,r}:=\{\tau~\text{is an}~\mathbb{F}-~\text{stopping time};~T^k_n\le \tau \le T^k_r~a,s\}$. For a given $\tau\in \overline{\mathcal{T}}_{k,n,r}$, let us define the $\mathbb{F}$-stopping time $Q(\tau) = \min\{T^k_p; T^k_p > \tau\}$ and

$$\eta(\tau)  = \sum_{\ell=n}^{r+1}T^k_{\ell-1}\mathds{1}_{\{Q(\tau) = T^k_\ell\}}\in \widetilde{\mathcal{T}}_{k,n,r}.$$
We observe that $\mathbb{P}\{\eta(\tau) = T^k_{n-1}\}=0$. Moreover, since $Z^k$ has c\`adl\`ag paths, we have

$$
Z^k(\tau \wedge T) =Z^k\big((Q(\tau)\wedge T) -\big)=Z^k\big(\eta(\tau)\wedge T\big)~a.s.
$$
This proves (\ref{rest1}). We now claim that

\begin{equation}\label{rest2}
\esssup_{\tau\in \widetilde{\mathcal{T}}_{k,n,r} }\mathbb{E}\Big[Z^k\big(\tau\wedge T\big)|\mathcal{F}^k_{T^k_n}\Big] = \esssup_{\tau\in D^{k,r}_{n}}\mathbb{E}\Big[Z^k\big(\tau\wedge T\big)|\mathcal{F}^k_{T^k_n}\Big]~a.s.
\end{equation}
In order to check (\ref{rest2}), we make use of Lemma \ref{condformulas} as follows. Let us define the following objects:

$$F^k_{n,r}(\textbf{b}^k_n):=\max_{(a^k_n,\ldots,a^k_r)\in \mathbb{A}^{r-n+1}}G^k_{n,r}(a^k_n,\ldots,a^k_r,\textbf{b}^k_n),$$
where
$$G^{k}_{n,r}(a^k_n,\ldots,a^k_r, \textbf{b}^k_n):=\int_{\mathbb{S}^{r-n}_k}\gamma^k_{n,r}\Big(a^k_n,\ldots,a^k_r,\textbf{b}^k_n, q^k_{n,r}\big)\Big)\nu^k_{n,r}(dq^k_{n,r}|\textbf{b}^k_n),$$
and

$$\widetilde{F}^k_{n,r}(\textbf{b}^k_n):=\max_{(a^k_n,\ldots,a^k_r)\in \mathbb{A}^{r-n+1}}\widetilde{G}^k_{n,r}(a^k_n,\ldots,a^k_r, \textbf{b}^k_n),$$
where
$$\widetilde{G}^k_{n,r}(a^k_n,\ldots,a^k_r, \textbf{b}^k_n):=\int_{\mathbb{S}^{r-n}_k}\int_{\Omega\times \mathbb{R}^{r-n+1}_{+}}\gamma^k_{n,r}\Big(a^k_n,\ldots,a^k_r,\textbf{b}^k_n, q^k_{n,r}\big)\Big)H^k_{n,r}(d\omega dx_n\ldots dx_r|\textbf{b}^k_n, q^k_{n,r})\nu^k_{n,r}(dq^k_{n,r}|\textbf{b}^k_n),
$$
for each $\textbf{b}^k_n\in \mathbb{S}^k_n$ and $(a^k_n,\ldots,a^k_r)\in \mathbb{A}^{r-n+1}$. Since for each $E\in \mathcal{B}(\mathbb{S}^{r-n}_k))$ and $F\in\mathcal{B}(\Omega\times \mathbb{S}^{r-n+1}_k)$, the disintegrations

$$\textbf{b}^k_n\mapsto \nu^k_{n,r}(E|\textbf{b}^k_n)~\text{and}~ \textbf{b}^k_r\mapsto H^k_{n,r}(F|\textbf{b}^k_r)$$
are Borel functions, then we can safely state that (see Prop 7.29 in \cite{bertsekas}) both $G^k_{n,r}$ and $\widetilde{G}^k_{n,r}$ are Borel functions. Therefore, Lemma \ref{condformulas} yields

$$G^{k}_{n,r}(\text{Pr}_n\circ\Xi^{k,g^k}_{n,r}(\mathcal{A}^k_n)) = \mathbb{E}\Big[Z^k(\tau\wedge T)|\mathcal{F}^k_{T^k_n}\Big],\quad\widetilde{G}^k_{n,r}\big(\text{Pr}_n\circ\Xi^{k,\varphi^k}_{n,r}(\mathcal{A}^k_n)\big) = \mathbb{E}\Big[Z^k(\eta\wedge T)|\mathcal{F}^k_{T^k_n}\Big]$$
a.s for each $\tau\in D^k_{n,r}$ and $\eta\in \widetilde{\mathcal{T}}_{k,n,r}$, where

$$\text{Pr}_n\circ\Xi^{k,g^k}_{n,r}(\textbf{b}^k_n):=\Big( g^k_n(\textbf{b}^k_n), \ldots, g^k_r(\textbf{b}^k_r), \textbf{b}^k_n\Big)$$
is the projection of $\Xi^{k,g^k}_{n,r}(\textbf{b}^k_r)$ onto $\mathbb{A}^{r-n+1}\times \mathbb{S}^n_k$ and
$$\text{Pr}_n\circ\Xi^{k,\varphi^k}_{n,r}(\textbf{b}^k_n):=\Big( \varphi^k_n\big(\Phi^k_n(\omega,x_n)\big), \ldots, \varphi^k_r\big(\Phi^k_r(\omega,x_r)\big), \textbf{b}^k_n\Big)$$
is the projection of $\Xi^{k,\varphi^k}_{n,r}(\textbf{b}^k_r)$ onto $\mathbb{A}^{r-n+1}\times \mathbb{S}^n_k$. By construction,

$$
F^k_{n,r}(\mathcal{A}^k_n) = \esssup_{\tau\in D^{k,r}_{n}}\mathbb{E}\Big[Z^k\big(\tau\wedge T\big)|\mathcal{F}^k_{T^k_n}\Big]~a.s
$$
and
$$
\widetilde{F}^k_{n,r}(\mathcal{A}^k_n)=\esssup_{\tau\in \widetilde{\mathcal{T}}_{k,n,r} }\mathbb{E}\Big[Z^k\big(\tau\wedge T\big)|\mathcal{F}^k_{T^k_n}\Big]~a.s.
$$
we notice that $F^k_{n,r} = \widetilde{F}^k_{n,r}$ and hence the claim (\ref{rest2}) holds true. The argument outlined above actually shows that (\ref{rest2}) holds when $r=+\infty$, namely

\begin{equation}\label{rest4}
\esssup_{\tau\in \widetilde{\mathcal{T}}_{k,n} }\mathbb{E}\Big[Z^k\big(\tau\wedge T\big)|\mathcal{F}^k_{T^k_n}\Big] = \esssup_{\tau\in D^{k,\infty}_{n}}\mathbb{E}\Big[Z^k\big(\tau\wedge T\big)|\mathcal{F}^k_{T^k_n}\Big]~a.s
\end{equation}
where $\widetilde{\mathcal{T}}_{k,n}$ is the set of all $\mathbb{F}$-stopping times of the form

$$\tau = \sum_{\ell=n}^{\infty} T^k_\ell\mathds{1}_{\{\tau = T^k_\ell\}}$$
and $\{\tau = T^k_\ell\}\in \mathcal{F}_{T^k_\ell}; \ell\ge n$. This holds due to the fact that the set of sequences of natural numbers $(x_i)_{i=n}^{\infty}$ such that $\sum_{j=n}^{\infty}x_j = 1$ is countable. Lastly, the argument to prove (\ref{rest1}) actually shows that

\begin{equation}\label{rest5}
\esssup_{\tau\in \widetilde{\mathcal{T}}_{k,n} }\mathbb{E}\Big[Z^k\big(\tau\wedge T\big)|\mathcal{F}^k_{T^k_n}\Big] = \esssup_{\tau\in \overline{\mathcal{T}}_{k,n}}\mathbb{E}\Big[Z^k\big(\tau\wedge T\big)|\mathcal{F}^k_{T^k_n}\Big]~a.s.,
\end{equation}
where $\overline{\mathcal{T}}_{k,n}:=\{\tau~\text{is an}~\mathbb{F}-\text{stopping time};~T^k_n\le \tau \le +\infty~a.s.\}$. Identities (\ref{rest4}) and (\ref{rest5}) allow us to conclude the proof.

\

\noindent \textbf{Proof of Theorem~\ref{mainTh}:}
The variational inequality (\ref{varineq}) is a straightforward consequence of the classical discrete time dynamic programming principle so we shall omit this proof. Let us check the convergence. At first, we observe the Snell envelope $S$ has continuous paths. Indeed, by assumption \textbf{(A1)}, the reward process is a class D regular process and hence by applying Th 2.3.5 in \cite{lamberton} and assumption \textbf{(A2)}, the associated Snell envelope is a D regular process as well. Therefore, the Snell envelope $S$ has continuous paths under assumptions \textbf{(A1-A2)}. Let us define

$$\delta^k S(t): = \sum_{n=0}^{\infty}\mathbb{E}\Big[S(T^k_n)|\mathcal{F}^k_{T^k_n}\Big]\mathds{1}_{\{T^k_n\le t < T^k_{n+1}\}}; 0\le t\le T.$$
By Lemma 3.1 in \cite{LEAO_OHASHI2017.1} and assumption \textbf{(A2)}, we may use uniform integrability to safely state that

$$\lim_{k\rightarrow+\infty}\mathbb{E}\sup_{0\le t\le T}|\delta^kS(t) - S(t)|=0.$$
One can easily check that $\{\mathbb{E} \left[ Z(\tau) \mid \mathcal{F}_{T^k_n} \right]; \tau \in \mathcal{T}_{k,n}\}$ has the lattice property (see e.g. Def. 1.1.2 in \cite{lamberton}). Therefore, the tower property of the conditional expectation and Prop. 1.1.4 in \cite{lamberton} yield

$$
\delta^k S (t)=\sum_{n=0}^{\infty} \mathbb{E} \left[ S(T^k_n) \mid \mathcal{F}^k_{T^k_n} \right]1\!\!1_{ \{T^k_n \leq t < T^k_{n+1} \}  } = \sum_{n=0}^{\infty} \text{ess} \sup_{\tau \in \mathcal{T}_{k,n}}   \mathbb{E} \left[ Z(\tau) \mid  \mathcal{F}^k_{T^k_n} \right]1\!\!1_{ \{T^k_n \leq t < T^k_{n+1} \}}~a.s.,
$$
for $0 \leq t \leq T$. On the other hand, from Proposition \ref{equality}, we have

\begin{eqnarray*}
S^k(t)&=& \sum_{n=0}^{\infty} \text{ess} \sup_{\tau \in D^k_{n,T}}   \mathbb{E} \left[ Z^k (\tau\wedge T^k_{e(k,T)}) \mid  \mathcal{F}^k_{T^k_n} \right]1\!\!1_{ \{T^k_n \leq t < T^k_{n+1} \}  }\\
& &\\
&=&\sum_{n=0}^{\infty} \text{ess} \sup_{\tau \in \mathcal{T}_{k,n} }   \mathbb{E} \left[ Z^k (\tau\wedge T^k_{e(k,T)} ) \mid  \mathcal{F}^k_{T^k_n} \right]1\!\!1_{ \{T^k_n \leq t < T^k_{n+1} \} }~a.s.,
\end{eqnarray*}
for $0\le t\le T$. By applying Doob maximal inequality in the closable discrete-time martingale $\mathbb{E}[\sup_{0\le t\le T}|Z(t)-Z^k(t\wedge T^k_{e(k,T)})||\mathcal{F}^k_{T^k_n}]; n\ge 0$ and Jensen inequality, we can find a positive constant $C$ such that

\begin{eqnarray}
\nonumber\mathbb{E}\sup_{0\le t\le T}|S^k (t) - \delta^k S(t)|^p&\le& \mathbb{E}\sup_{n\ge 0}\Big| \mathbb{E}\Big[\sup_{0\le t\le T}|Z(t)-Z^k(t\wedge T^k_{e(k,T)})|\big|\mathcal{F}^k_{T^k_n}   \Big]\Big|^p 1\!\!1_{ \{T^k_n \leq T \}} \\
\nonumber& &\\
\label{sks}&\le& C \mathbb{E}\sup_{0\le t\le T}|Z(t)-Z^k(t\wedge T^k_{e(k,T)})|^p
\end{eqnarray}
as $k\rightarrow \infty$ for some $p>1$ due to the imbedded discrete property. In order to prove the right-hand side of (\ref{sks}) vanishes as $k\rightarrow\infty$ we just notice that $Z$ has continuous paths and a simple triangle inequality argument jointly with Lemma \ref{ratetenk} allows us to conclude the proof.

\section{Examples of non-Markovian optimal stopping problems}\label{ExamplesSECTION}
In this section, we show how Theorem \ref{optSTres} can be applied to concrete non-Markovian optimal stopping problems. To simplify the presentation, we set $d=1$. Throughout this section, we make use of the following notation. Let $D([0,t];\mathbb{R})$ be the linear space of $\mathbb{R}$-valued c\`adl\`ag paths on $[0,t]$ and let $\Lambda:=\{(t,\omega(\cdot \wedge t)); t\in [0,T]; \omega\in D([0,T];\mathbb{R})\}$ be the set of stopped paths endowed with the distance


$$d_{\beta}((t,\omega); (t',\omega')): = \sup_{0\le u\le T}|\omega(u\wedge t) - \omega'(u\wedge t')| + |t-t'|^{\beta},$$
where $0< \beta\le 1$. We say that $G$ is a \textit{non-anticipative} functional if it is a Borel mapping and

$$G(t,\omega) = G(t,\omega(\cdot \wedge t)); (t,\omega)\in[0,T]\times D([0,T];\mathbb{R}).$$

The coefficients of the SDEs will satisfy the following regularity conditions:

\

\noindent \textbf{Assumption I}: The non-anticipative mappings $\alpha: \Lambda \rightarrow \mathbb{R}$ and $\sigma:\Lambda \rightarrow \mathbb{R}$ are Lipschitz continuous, i.e., there exists a constant $K_{Lip}>0$ such that

$$|\alpha(t,\omega) - \alpha(t',\omega')| + |\sigma(t,\omega) - \sigma(t',\omega')|\le K_{Lip}d_{\theta} \big((t,\omega); (t',\omega')\big)$$
for every $t,t'\in [0,T]$ and $\omega,\omega'\in D([0,T];\mathbb{R})$, where $0 < \theta \le 1$.

\

\noindent \textbf{Assumption II}: The reward process is given by

\begin{equation}\label{rewardSDEBM}
Z(t) = F(t,X)
\end{equation}
where $X$ satisfies (\ref{pdsdeBM}) or (\ref{limsdefbm}) and $F:\Lambda \rightarrow\mathbb{R}$ is a non-anticipative Lipschitz functional, i.e., there exists constant $\|F\|$ such that

$$|F(t,\omega)- F(t',\omega')|\le \|F\|d_{\theta}\big((t,\omega);(t',\omega')\big)$$
for every $t,t'\in [0,T]$, $\omega,\omega'\in D([0,T];\mathbb{R})$, where $0 < \theta\le 1$.

\subsection{Non-Markovian SDE driven by Brownian motion} The underlying state process is the following SDE

\begin{equation}\label{pdsdeBM}
X(t) = x + \int_0^t \alpha(s,X)ds + \int_0^t \sigma(s,X)dB(s); 0\le t\le T,
\end{equation}
with a given initial condition $X(0)=x\in \mathbb{R}$. One can easily check by routine arguments that the SDE (\ref{pdsdeBM}) admits a strong solution in $\mathbf{B}^p(\mathbb{F})$ for every $p\ge 1$. The natural candidate for an imbedded discrete structure $\big((Z^k)_{k\ge1},\mathscr{D}\big)$ w.r.t $Z$ is given by

\begin{equation}\label{ZkBM}
Z^k(t) := \sum_{n=0}^{\infty}F\big(T^k_n, X^k\big)\mathds{1}_{\{T^k_n\le t < T^k_{n+1}\}}; 0\le t\le T
\end{equation}
where $X^k(0):=x$ and we define recursively

\begin{equation}\label{eulermaruyama}
X^{k}(T^k_{n}):=X^{k}(T^k_{n-1}) + \alpha\big(T^k_{n-1},X^{k}\big)\Delta T^k_{n} + \sigma (T^k_{n-1},X^k\big)\Delta A^{k}(T^k_{n})
\end{equation}
for $1\le n \le e(k,T)$ where $X^k(t)=\sum_{n=0}^{\infty} X^k(T^k_n)\mathds{1}_{\{T^k_n\le t \wedge T^k_{e(k,T)}< T^k_{n+1}\}}; 0\le t\le T.$

In the sequel, in order to simplify the presentation, we set $T=1$. Let $I^*$ be the Legendre transform of the hitting time $\inf\{t>0; |B(t)|=1\}$ (see e.g \cite{borodin}) given by

$$I^* (x) = \sup_{\lambda<0} \Bigg[ \lambda x - \ln \Bigg(\frac{1}{\text{cosh}(\sqrt{2|\lambda|})}\Bigg) \Bigg]; x<1.$$
\begin{proposition}\label{exBM}
If Assumptions \textbf{I-II} hold true for $\theta=1/2$ and $X$ satisfies (\ref{pdsdeBM}), then $\big((Z^k)_{k\ge 1},\mathscr{D}\big)$ given by (\ref{ZkBM}) is an imbedded discrete structure for $Z$. More importantly, for each $\beta \in (0,1)$ and $\zeta>1$, there exists a constant $C$ which depends on $\alpha$, $\sigma, \beta $ and $\zeta$ such that


\begin{equation}\label{rateBMex}
\big|V^k_0 - \sup_{\tau\in \mathcal{T}_0(\mathbb{F})}\mathbb{E}[Z(\tau)]\big|^2\le C\Big(  \epsilon^{2\beta}_k + \exp\big[-\zeta^{-1}\epsilon^{-2}_k I^* \big(1-\delta \big) \big] +  \delta \ln(2\delta^{-1})\Big)
\end{equation}


\noindent for every $k\ge 1$ and $\delta \in (0,1)$.
\end{proposition}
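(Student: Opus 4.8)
The plan is to split the argument into two parts: first verifying that $\big((Z^k)_{k\ge 1},\mathscr{D}\big)$ is an imbedded discrete structure for $Z$ in the sense of Definition \ref{GASdef}, and then establishing the quantitative bound (\ref{rateBMex}). For the structural part, the process $Z^k$ in (\ref{ZkBM}) is a pure jump process of the form (\ref{purejumpmodel}) by construction, and the integrability of its quadratic variation follows from standard moment estimates on the Euler--Maruyama recursion (\ref{eulermaruyama}) together with the i.i.d.\ structure of the increments $\Delta A^k(T^k_n)$. The essential point is the $\mathbf{B}^p$-convergence (\ref{scdef}): I would invoke the strong convergence of the path-dependent Euler scheme $X^k\to X$ established in the framework of \cite{LEAO_OHASHI2017.1} and combine it with the Lipschitz property of $F$ in the metric $d_\theta$ with $\theta=1/2$ (Assumption II) to deduce $\|Z^k-Z\|_{\mathbf{B}^p}\to 0$ for $p>1$; here Assumption I enters to control $\sup_n|X^k(T^k_n)-X(T^k_n)|$.

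For the quantitative bound, the starting point is the estimate (\ref{abcov}) from Theorem \ref{optSTres}, which together with Jensen's inequality gives
$$\Big|\sup_{\tau\in\mathcal{T}_0(\mathbb{F})}\mathbb{E}[Z(\tau)] - V^k_0\Big|^2 \le \Big(\mathbb{E}\|Z^k(\cdot\wedge T^k_{e(k,T)}) - Z\|_\infty\Big)^2 \le \mathbb{E}\sup_{0\le t\le 1}\big|Z^k(t\wedge R_k) - Z(t)\big|^2,$$
where I write $R_k:=T^k_{e(k,T)}$. Using $Z(t)=F(t,X)$ and $Z^k(t\wedge R_k)=F\big(T^k_{N^k(t)\wedge e(k,T)},X^k\big)$ and exploiting the Lipschitz property of $F$ in $d_{1/2}$, I would decompose the sup-norm error into three contributions: (i) the spatial Euler error $\sup_n|X^k(T^k_n)-X(T^k_n)|$; (ii) the time-discretization error $\sup_t|t-T^k_{N^k(t)}|^{1/2}$ arising on $[0,R_k]$; and (iii) the terminal gap error produced by freezing $Z^k$ at $R_k$ while $Z$ continues on $[R_k,1]$ whenever $R_k<1$.

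Terms (i) and (ii) yield the $\epsilon^{2\beta}_k$ contribution: the strong Euler rate from \cite{LEAO_OHASHI2017.1} gives $\mathbb{E}\sup_n|X^k(T^k_n)-X(T^k_n)|^2\le C\epsilon^{2\beta}_k$, while the maximal increment satisfies $\mathbb{E}\max_{n\le e(k,T)}\Delta T^k_n\le C\epsilon^2_k\log(1/\epsilon_k)\le C'\epsilon^{2\beta}_k$, the free parameter $\beta<1$ being precisely what absorbs the logarithmic corrections. For the terminal term (iii) I would split according to the level $\delta$. On the good event $\{R_k\ge 1-\delta\}$ the gap $[R_k,1]$ has length at most $\delta$, so the error is controlled by the squared modulus of continuity of $X$ over a window of length $\delta$, which by Lévy's modulus together with BDG-type estimates is of order $\delta\ln(2\delta^{-1})$; this produces the third term. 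On the bad event $\{R_k<1-\delta\}$ I would apply Hölder's inequality with conjugate exponents $(p,\zeta)$, bounding the error in $L^p$ (finite by \textbf{(A1)}--\textbf{(A2)} and the uniform $L^p$-bounds on $X,X^k$) and the probability by a Cramér large-deviation estimate: since $R_k=\epsilon^2_k\sum_{n=1}^{e(k,T)}\tau_n$ with $\tau_n$ i.i.d.\ copies of $\inf\{t:|B(t)|=1\}$ of unit mean, one has $\mathbb{P}(R_k<1-\delta)\le\exp[-e(k,T)I^*(1-\delta)]\le\exp[-\epsilon^{-2}_k I^*(1-\delta)]$, and raising to the power $1/\zeta$ produces exactly $\exp[-\zeta^{-1}\epsilon^{-2}_k I^*(1-\delta)]$. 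Collecting the three contributions and adjusting constants yields (\ref{rateBMex}).

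The main obstacle is the clean treatment of the terminal time $R_k=T^k_{e(k,T)}$: one must simultaneously import the sharp large-deviation rate with the correct Legendre transform $I^*$ and relate $e(k,T)$ to $\epsilon^{-2}_k$ so that the exponent reads $\zeta^{-1}\epsilon^{-2}_k I^*(1-\delta)$, balance this against the modulus-of-continuity term through the free parameter $\delta$, and keep the $L^p$-moments uniformly bounded so that the Hölder splitting is legitimate. The choice $\zeta>1$ is dictated by the Hölder conjugate exponent, and the monotonicity of $I^*$ below its mean is what allows the threshold to be replaced by $1-\delta$. The strong Euler estimate for the genuinely path-dependent coefficients under only $\theta=1/2$ time-regularity is the other delicate ingredient, but this is largely supplied by \cite{LEAO_OHASHI2017.1}.
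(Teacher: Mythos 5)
Your proposal is correct and follows essentially the same route as the paper: the strong Euler rate from the companion papers gives the $\epsilon_k^{2\beta}$ term, the terminal gap at $T^k_{e(k,T)}$ is split according to whether its length exceeds $\delta$, the modulus-of-continuity estimate (Th.~1 in \cite{fischer}) yields $\delta\ln(2\delta^{-1})$ on the good event, and H\"older with exponent $\zeta$ combined with the Chernoff/Cram\'er bound for the i.i.d.\ sum $T^k_{\epsilon_k^{-2}}$ yields $\exp[-\zeta^{-1}\epsilon_k^{-2}I^*(1-\delta)]$ on the bad event. The only cosmetic difference is that you control the increments of the true solution $X$ over the gap $[T^k_{e(k,T)},1]$, whereas the paper controls the increments of the Euler scheme $X^k$ over the overshoot $T^k_{e(k,1)}<T^k_p\le T^k_{N^k(1)}$; the estimates invoked are the same.
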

\begin{proof}
Let us fix $0 < \beta < 1$ and $\zeta >1$. A direct application of Corollary 6.1 in \cite{LEAO_OHASHI2017.2} jointly with Assumptions I-II and Lemma 2.2 in \cite{LEAO_OHASHI2017.1} yields
\begin{equation}\label{tail0}
\|Z^k  - Z\|^2_{\mathbf{B}^2}\le \|F\|^2C(\alpha,\sigma,\beta)\epsilon^{2\beta}_k; k\ge 1,
\end{equation}
for a constant $C(\alpha,\sigma,\beta)$. In the sequel, $C$ is a constant which may differ from line to line. Triangle inequality yields

\begin{equation}\label{tail1}
\mathbb{E}\|Z^k(\cdot \wedge T^k_{e(k,1)}) -  Z\|^2_\infty\le 2\mathbb{E}\|Z^k(\cdot \wedge T^k_{e(k,1)}) - Z^{k}\|^2_\infty + 2\mathbb{E}\|Z^k - Z\|^2_\infty=: I^k_1 + I^k_2.
\end{equation}
By (\ref{abcov}) in Theorem \ref{optSTres}, we only need to estimate $I^k_1$. We observe
\begin{equation}\label{tail2}
I^k_1\le C\|F\|^2\mathbb{E}\Big[\max_{T^k_{e(k,1)} < T^k_p \le T^k_{N^k(1)}}|X^{k}(T^k_{e(k,1)}) -X^{k}(T^k_p)|^2\mathds{1}_{\{T^k_{e(k,1)} < 1\}}\Big] + C\mathbb{E}|1- T^k_{e(k,1)}|;k\ge 1.
\end{equation}
We set $E_{k}= \{T^k_{e(k,1)} <  T^k_{N^k(1)}\}$, where we recall $N^k(1)$ is given by (\ref{Nk}). By using Lemma 6.2 in \cite{LEAO_OHASHI2017.2}, Assumption I, Jensen and  H\"{o}lder's inequality for $\zeta >2$, we get

\begin{equation}\label{tail3}
\mathbb{E}\max_{p\ge 1}\Bigg|\int_{T^k_{e(k,T)}}^{T^k_p}\alpha\big(\bar{s}_k,X^{k}\big)ds\Bigg|^2\mathds{1}_{E_{k}\cap G^k_p}\le C\|1 - T^k_{e(k,1)}\|_{L^\zeta};k\ge 1,
\end{equation}
for a constant $C$ which depends on $\alpha$. Here, $G^k_p = \{T^k_{e(k,1)} < T^k_p\le T^k_{N^k(1)}\}$ and we recall $\bar{s}_k$ is given by (\ref{Nk}). For each $\delta>0$ and $k\ge 1$, let us denote

$$E^1_{k,\delta} := \{T^k_{e(k,1)} <  T^k_{N^k(1)}, T^k_{N^k(1)}-T^k_{e(k,1)} > \delta\},~E^2_{k,\delta} :=\{T^k_{e(k,1)} < T^k_{N^k(1)}, T^k_{N^k(1)}- T^k_{e(k,1)} \le \delta\}.$$

For a given $\delta>0$, we split
\begin{small}
\begin{eqnarray*}
\max_{p\ge 1}\Bigg|\int_{T^k_{e(k,1)}}^{T^k_p} \sigma(\bar{s}_k,X^{k})dB(s) \Bigg|^2\mathds{1}_{E_{k}\cap G^k_p}&=&
\max_{p\ge 1}\Bigg|\int_{T^k_{e(k,1)}}^{T^k_p} \sigma(\bar{s}_k,X^{k}) dB(s) \Bigg|^2\mathds{1}_{E^1_{k,\delta}\cap G^k_p}\\
& &\\
&+& \max_{p\ge 1}\Bigg|\int_{T^k_{e(k,1)}}^{T^k_p} \sigma(\bar{s}_k,X^{k})dB(s) \Bigg|^2\mathds{1}_{E^2_{k,\delta}\cap G^k_p}\\
& &\\
&=:& J^k_1(\delta) + J^k_2(\delta).
\end{eqnarray*}
\end{small}
By the additivity of the stochastic integral, we shall apply Burkholder-Davis-Gundy and  H\"{o}lder inequalities jointly with Lemma 6.2 in \cite{LEAO_OHASHI2017.2} to get a constant $C$ which depends on $\sigma$ such that

\begin{equation}\label{tail4}
\mathbb{E}J^k_1(\delta)\le C \big(\mathbb{P}\{\delta+T^k_{e(k,1)}< T^k_{N^k_1}\}\big)^{\frac{1}{\zeta}}\le  C \big(\mathbb{P}\{\delta+T^k_{e(k,1)}< 1\}\big)^{\frac{1}{\zeta}}
\end{equation}
for any $\zeta >2$, $k\ge 1$ and $\delta\in (0,1)$. In order to estimate $J^k_{2}(\delta)$, we need to work with the modulus of continuity of the stochastic integral

$$m_{k}(h) := \sup_{t,s\in [0,1], |t-s|\le h}\Big| \int_{s}^{t} \sigma(s,X^{k})dB(s) \Big|$$
for $h>0$. We notice $J^k_2(\delta)\le m^2_{k}(\delta)~a.s$ for every $k\ge 1$ and $\delta>0$. By applying Th 1 in \cite{fischer} jointly with Assumption I and Lemma 6.2 in \cite{LEAO_OHASHI2017.2}, we arrive at the following estimate

\begin{equation}\label{tail5}
\mathbb{E}J^k_{2}(\delta)\le C \delta\ln \Big(\frac{2}{\delta}\Big)
\end{equation}
for every $k\ge 1$ and $\delta >0$, where $C$ is a constant which only depends on $(\alpha,\sigma)$. Finally, since $e(k,1)=\epsilon^{-2}_k$ and $ T^k_{\epsilon^{-2}_k}=\sum_{i=1}^{\epsilon^{-2}_k}\Delta T^k_n$ is an iid sum of random variables with mean $\epsilon^2_k$, we shall use classical Large Deviations techniques to find for each $q\ge 1$, a constant $C$ (only depending on $q$) such that

\begin{equation}\label{tail6}
\mathbb{P}\{\delta + T^k_{\epsilon^{-2}_k} < 1\}\le \exp\big(-\epsilon^{-2}_k I^* (1-\delta) \big),\quad \mathbb{E}|T^k_{\epsilon^{-2}_k}-1|^q \le C \epsilon^{2q}_k
\end{equation}
for every $k\ge 1$ and $\delta\in (0,1)$. Now, we just need to use (\ref{tail6}) into (\ref{tail2}), (\ref{tail3}) and (\ref{tail4}). By observing the estimates (\ref{tail0}) and (\ref{tail1}), we then conclude the proof.





\end{proof}
\subsection{Non-Markovian SDE driven by Fractional Brownian motion}\label{fbmsection}

In this section, we analyze Theorem \ref{optSTres} when the reward process $Z=F(X)$ is driven by the following SDE

\begin{equation}\label{limsdefbm}
X(t) = x+ \int_0^t\alpha(s,X)ds + B_H(t)
\end{equation}
where $\alpha$ is a bounded non-anticipative functional satisfying Assumption I and $B_H$ is the one-dimensional fractional Brownian motion (henceforth abbreviated by FBM) over the interval $[0,T]$ for $\frac{1}{2}< H < 1$. By Th 3.4 in \cite{Hu}, we shall represent

$$
B_H(t) = \int_0^t \rho_H(t,s)B(s)ds; 0\le t\le T,
$$
where $\rho_H(t,s):=\partial_s K_H(t,s)$ and $K_H(t,s):=d_Hs^{\frac{1}{2}-H}\int_s^t u^{H-\frac{1}{2}}(u-s)^{H-\frac{3}{2}}du; 0< s < t\le T$ is the classical square-integrable Volterra kernel which represents FBM. In order to construct an imbedded discrete structure for $Z$, we need a structure $B^k_H$ for $B_H$ which converges in $\mathbf{B}^1$ and the natural candidate is

$$
B^k_{H}(t): = \int_0^{\bar{t}_k}\rho_H(\bar{t}_k,s)A^k(s)ds;0\le t\le T,
$$
where we recall $\bar{t}_k$ is given by (\ref{Nk}). Due to singularity of the kernel $\rho_H$ jointly with appearance of random times and the piecewise constant process $A^k$, it is not obvious $B^k_H$ fulfills this requirement. The proof of the next result is postponed to the Appendix.
\begin{proposition}\label{FBMapproximation}
If $\frac{1}{2} < H < 1$ and $H-\frac{1}{2} < \lambda < \frac{1}{2}$, there exists a constant $C$ which depends on $H$, $T$ and $\lambda$ such that

$$\mathbb{E}\sup_{0\le t\le T}|B^k_H(\bar{t}_k) - B_H(t)|\le C\epsilon_k^{1-2\lambda}$$
for every $k\ge 1$.
\end{proposition}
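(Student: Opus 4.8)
The plan is to split the error into a genuinely singular term and a harmless fractional time-discretization term, the former being the heart of the matter. Since $B^k_H(\bar{t}_k)=\int_0^{\bar{t}_k}\rho_H(\bar{t}_k,s)A^k(s)\,ds$ depends on $t$ only through $\bar{t}_k$, I would write, for each fixed $t$,
\[
B^k_H(\bar{t}_k)-B_H(t)=\underbrace{\int_0^{\bar{t}_k}\rho_H(\bar{t}_k,s)\big(A^k(s)-B(s)\big)\,ds}_{=:R^k_1(t)}+\underbrace{\big(B_H(\bar{t}_k)-B_H(t)\big)}_{=:R^k_2(t)}.
\]
The key simplification, valid here because $d=1$, is the pathwise identity $A^k(s)=B(\bar{s}_k)$, where $\bar{s}_k$ is the largest $T^k_n$ not exceeding $s$ (the random walk coincides with $B$ at the hitting times $T^k_n$ since each increment is exactly $\pm\epsilon_k$). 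Thus the integrand of $R^k_1$ equals $B(\bar{s}_k)-B(s)$, for which I have two complementary estimates: the deterministic hitting bound $|B(\bar{s}_k)-B(s)|\le\epsilon_k$, and the H\"older bound $|B(\bar{s}_k)-B(s)|\le\|B\|_{C^{H-\lambda}}|s-\bar{s}_k|^{H-\lambda}\le\|B\|_{C^{H-\lambda}}\,s^{H-\lambda}$. Brownian paths are $(H-\lambda)$-H\"older with $\|B\|_{C^{H-\lambda}}\in L^p$ for every $p$ precisely because $H-\lambda<\tfrac12$, i.e. because $\lambda>H-\tfrac12$; this is the first place the lower constraint on $\lambda$ enters.

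The main obstacle is that $\rho_H(\bar{t}_k,\cdot)$ is not integrable at the origin. From the Volterra representation of \cite{Hu} one has $|\rho_H(t,s)|\le C_H\,s^{-1/2-H}$ near $s=0$, uniformly for $t\in[0,T]$ (together with the milder, integrable blow-up $|\rho_H(t,s)|\sim(t-s)^{H-3/2}$ as $s\uparrow t$), and $\int_0 s^{-1/2-H}\,ds=+\infty$ for $H>\tfrac12$. Hence one cannot bound $R^k_1$ by $\sup_s|A^k(s)-B(s)|\int|\rho_H|$. To circumvent this I would split $\int_0^{\bar{t}_k}=\int_0^{\delta}+\int_{\delta}^{\bar{t}_k}$ at the natural hitting scale $\delta=\epsilon_k^2$. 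On $[\delta,\bar{t}_k]$ I use the bound $\epsilon_k$ together with $\int_\delta^{\bar{t}_k}|\rho_H(\bar{t}_k,s)|\,ds\le C(\delta^{-(H-1/2)}+1)$, which gives $|R^k_1|\lesssim\epsilon_k\,\delta^{-(H-1/2)}=\epsilon_k^{2-2H}$, and since $\lambda\ge H-\tfrac12$ we have $\epsilon_k^{2-2H}\le\epsilon_k^{1-2\lambda}$. On $[0,\delta]$ I use the H\"older bound with exponent $H-\lambda$, so the integrand is $\lesssim s^{-1/2-H}s^{H-\lambda}=s^{-1/2-\lambda}$, whence $\int_0^\delta s^{-1/2-\lambda}\,ds=C\,\delta^{1/2-\lambda}=C\,\epsilon_k^{1-2\lambda}$, which is finite exactly because $\lambda<\tfrac12$. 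The two hypotheses on $\lambda$ thus enter in a perfectly matched way: $\lambda>H-\tfrac12$ makes the H\"older constant integrable, while $\lambda<\tfrac12$ makes the singular integral converge, and together they produce the exponent $\epsilon_k^{1-2\lambda}$.

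For the remainder $R^k_2(t)=B_H(\bar{t}_k)-B_H(t)$ I would use the $\gamma'$-H\"older continuity of fractional Brownian motion for any $\gamma'<H$ together with $|t-\bar{t}_k|\le\Delta^*_k:=\max_{n\le N^k(T)+1}\Delta T^k_n$, giving $|R^k_2(t)|\le\|B_H\|_{C^{\gamma'}}(\Delta^*_k)^{\gamma'}$. The Le\~ao--Ohashi estimates (Lemma 6.2 in \cite{LEAO_OHASHI2017.2}) and the Large Deviation tail (\ref{tail6}) control $\Delta^*_k$ at the scale $\epsilon_k^2$ up to logarithms, so that $\mathbb{E}|R^k_2(t)|\lesssim\epsilon_k^{2\gamma'}$; choosing $\gamma'$ close to $H$ makes this $O(\epsilon_k^{2H})$, which is negligible against $\epsilon_k^{1-2\lambda}$ because $2H>1\ge1-2\lambda$.

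Finally I would pass to $\sup_t$ and take expectations. All kernel estimates are uniform in the upper limit $\bar{t}_k\in[0,T]$, and the pathwise estimates are governed by the single random variables $\|B\|_{C^{H-\lambda}}$ and $\|B_H\|_{C^{\gamma'}}$, so the per-$t$ bounds hold simultaneously for all $t$; taking expectations and using that these H\"older norms belong to every $L^p$ (Garsia--Rodemich--Rumsey / Burkholder--Davis--Gundy) yields $\mathbb{E}\sup_{0\le t\le T}|B^k_H(\bar{t}_k)-B_H(t)|\le C\epsilon_k^{1-2\lambda}$. I expect the delicate points to be the uniform-in-$\bar{t}_k$ handling of both singularities of $\rho_H$ (treating $[0,\bar{t}_k/2]$ and $[\bar{t}_k/2,\bar{t}_k]$ separately when $\bar{t}_k\gg\delta$, and absorbing the degenerate case $\bar{t}_k\lesssim\delta$ into the inner estimate) together with the moment bounds on the maximal gap $\Delta^*_k$; both are routine given the cited results but must be assembled carefully, and the genuinely non-trivial step remains the estimate of $R^k_1$ across the non-integrable singularity at $s=0$.
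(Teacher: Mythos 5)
Your argument is correct, and it reaches the rate $\epsilon_k^{1-2\lambda}$ by a genuinely different route from the paper's for the central singular term. Both proofs start from the same decomposition (kernel error plus the increment $B_H(\bar{t}_k)-B_H(t)$, the latter handled identically via H\"older continuity of $B_H$ and moments of the maximal gap $\max_n \Delta T^k_n$), and both isolate the non-integrable $s^{-\frac{1}{2}-H}$ blow-up of $\rho_H$ at the origin as the only delicate point, treating the integrable singularities by the crude pathwise bound $|A^k(s)-B(s)|\le\epsilon_k$ (which, as in your argument, rests on the $d=1$ identity $A^k(s)=B(\bar{s}_k)$). Where you diverge is in how the origin singularity is tamed: the paper applies H\"older's inequality in $s$ with exponents $(p,q)$ chosen so that $s^{(-\frac{1}{2}-H+\lambda)q}$ is integrable, reducing everything to the moment $\mathbb{E}\int_0^u s^{-\lambda p}|A^k(s)-B(s)|^p ds$, which is then computed excursion-by-excursion via the strong Markov property at the hitting times $T^k_n$ (plus a separate tail estimate on $\{N^k(u)=0\}$); you instead stay entirely pathwise, interpolating between the deterministic bound $\epsilon_k$ for $s\ge\epsilon_k^2$ and the H\"older bound $\|B\|_{C^{H-\lambda}}s^{H-\lambda}$ for $s\le\epsilon_k^2$, with the crossover at the natural hitting scale $\delta=\epsilon_k^2$ producing $\epsilon_k^{2-2H}+\epsilon_k^{1-2\lambda}$. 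Your version is more elementary (no disintegration, no strong Markov computation, no H\"older in $s$) and makes the role of the two constraints on $\lambda$ completely transparent — $\lambda>H-\frac{1}{2}$ gives an $L^p$ H\"older constant and matches the two regimes at the crossover, $\lambda<\frac{1}{2}$ makes $s^{-\frac{1}{2}-\lambda}$ integrable — at the price of invoking the pathwise H\"older norm of $B$ (via Garsia--Rodemich--Rumsey/Fernique), which the paper in any case also uses for the $\{N^k(u)=0\}$ term and for $B_H$. The bookkeeping you flag at the end (uniformity in $\bar{t}_k$ across the two singularities of $\rho_H$, the degenerate case $\bar{t}_k\le\epsilon_k^2$) does check out: the $(t-s)^{H-\frac{3}{2}}$ part of the kernel is integrable with a bounded prefactor, so the $\epsilon_k$ bound disposes of it globally exactly as in the paper's estimate (\ref{in2}), and in the degenerate case the H\"older bound alone yields $\bar{t}_k^{\frac{1}{2}-\lambda}\le\epsilon_k^{1-2\lambda}$.
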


The natural candidate for an imbedded discrete structure $\big((Z^k)_{k\ge1},\mathscr{D}\big)$ w.r.t (\ref{rewardSDEBM}) is given by

\begin{equation}\label{ZkFBM}
Z^k(t) := \sum_{n=0}^{\infty}F\big(T^k_n, X^k\big)\mathds{1}_{\{T^k_n\le t < T^k_{n+1}\}}; 0\le t\le T
\end{equation}
where $X^k(0):=x$ and we define recursively

$$
X^{k}(T^k_{n}):=X^{k}(T^k_{n-1}) + \alpha\big(T^k_{n-1},X^{k}\big)\Delta T^k_{n} +\Delta B^k_{H}(T^k_{n})
$$
for $1\le n\le e(k,T)$. We are now able to state the main result of this section. To simplify the presentation, we set $T=1$.

\begin{proposition}\label{exFBM}
If Assumptions \textbf{I-II} hold for $\theta=1$, $X$ satisfies (\ref{limsdefbm}) with $\frac{1}{2} < H < 1$ and $\alpha$ is bounded, then $\big((Z^k)_{k\ge 1},\mathscr{D}\big)$ given by (\ref{ZkFBM}) is an imbedded discrete structure for $Z$. More importantly, for every $H-\frac{1}{2} < \lambda < \frac{1}{2}$, there exists a constant $C$ which depends on $\alpha, H,\lambda$ such that

\begin{equation}\label{rateFBMex}
\big|V^k_0 - \sup_{\tau\in \mathcal{T}_0(\mathbb{F})}\mathbb{E}[Z(\tau)]\big| \le C \epsilon_k^{1-2\lambda}
\end{equation}
for every $k\ge 1$.
\end{proposition}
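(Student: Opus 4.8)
The plan is to follow the architecture of the proof of Proposition~\ref{exBM}: use the a priori bound (\ref{abcov}) of Theorem~\ref{optSTres} to reduce the whole statement to a single strong approximation estimate in $\mathbf{B}^1$, and then feed in the sharp fractional-kernel rate supplied by Proposition~\ref{FBMapproximation}. Concretely, (\ref{abcov}) gives
$$\big|V^k_0 - \sup_{\tau\in\mathcal{T}_0(\mathbb{F})}\mathbb{E}[Z(\tau)]\big|\le \mathbb{E}\|Z^k(\cdot\wedge T^k_{e(k,1)}) - Z\|_\infty,$$
so it suffices to bound the right-hand side by $C\epsilon_k^{1-2\lambda}$; the same estimate simultaneously yields the imbedded discrete structure property (the pure-jump form and integrable quadratic variation of $X^k$ being routine). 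Using that $F$ is $d_1$-Lipschitz (Assumption~II with $\theta=1$), the reward error is controlled by $\|F\|$ times the state error $\mathbb{E}\sup_{0\le t\le 1}\big(|X^k(\bar{t}_k\wedge T^k_{e(k,1)}) - X(t)| + |\bar{t}_k - t|\big)$, so the heart of the matter is the claim
$$\mathbb{E}\sup_{0\le t\le 1}|X^k(\bar{t}_k) - X(t)|\le C\epsilon_k^{1-2\lambda}.$$

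To prove this claim I would write both processes in integral form. Summing the recursion defining $X^k$ gives $X^k(\bar{t}_k) = x + \int_0^{\bar{t}_k}\alpha(\bar{s}_k,X^k)\,ds + B^k_H(\bar{t}_k)$, whereas $X(t) = x + \int_0^t\alpha(s,X)\,ds + B_H(t)$. Subtracting and splitting the drift as $\int_0^{\bar{t}_k}[\alpha(\bar{s}_k,X^k)-\alpha(s,X)]\,ds - \int_{\bar{t}_k}^t\alpha(s,X)\,ds$, the boundary integral is bounded by $\|\alpha\|_\infty|t-\bar{t}_k|$ by boundedness of $\alpha$, and the fractional term $B^k_H(\bar{t}_k)-B_H(t)$ is controlled directly by Proposition~\ref{FBMapproximation} at rate $\epsilon_k^{1-2\lambda}$. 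The remaining drift piece is handled via Assumption~I (Lipschitz in $d_1$), which reproduces $\sup_{u\le t}|X^k(\bar{u}_k)-X(u)|$ together with a negligible time term $|\bar{s}_k - s|$ inside the integral; a Gronwall argument then closes the estimate with a constant depending only on $K_{Lip}$ and $T$. The leftover time-increment contributions $|\bar{t}_k-t|,|\bar{s}_k-s|$ and the terminal defect $|1-T^k_{e(k,1)}|$ are of order $\epsilon_k^2$ in every $L^q$ by the large-deviation tail (\ref{tail6}) and Lemma~\ref{ratetenk}, hence dominated by $\epsilon_k^{1-2\lambda}$ since $1-2\lambda<1$.

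The terminal truncation $\cdot\wedge T^k_{e(k,1)}$ is then removed exactly as in (\ref{tail1})--(\ref{tail2}): the discrepancy between $Z^k(\cdot\wedge T^k_{e(k,1)})$ and $Z^k$ involves only the excursion of $X^k$ over $[T^k_{e(k,1)},T^k_{N^k(1)}]$, whose $\mathbf{B}^1$-size is governed by $\mathbb{E}|1-T^k_{e(k,1)}|$ through boundedness of $\alpha$ and the increments of $B^k_H$, and vanishes faster than $\epsilon_k^{1-2\lambda}$ by Lemma~\ref{ratetenk}. Collecting the three rates and retaining the dominant one yields (\ref{rateFBMex}).

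The step I expect to be the main obstacle is the interaction between the random, data-dependent mesh $\{T^k_n\}$ and the singular Volterra kernel $\rho_H$: the Gronwall argument must be run pathwise along random stopping times while \emph{propagating} the non-classical rate $\epsilon_k^{1-2\lambda}$ of Proposition~\ref{FBMapproximation} without degradation coming from the diagonal singularity of $\rho_H$. Ensuring that the exponential Gronwall constant stays uniform in $k$ --- that is, that neither the random number of steps $N^k(1)$ nor the random increments $\Delta T^k_n$ inflate the bound --- is the delicate point, and is exactly where the boundedness of $\alpha$ and the uniform moment control of $T^k_{e(k,1)}$ in (\ref{tail6}) enter.
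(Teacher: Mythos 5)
Your proposal is correct and follows essentially the same route as the paper: reduce via (\ref{abcov}) and the Lipschitz property of $F$ to a $\mathbf{B}^1$-rate for the state, obtain that rate from Proposition \ref{FBMapproximation} together with a Gronwall/Euler argument for the bounded drift (a step the paper outsources to Proposition 6.2 of \cite{LEAO_OHASHI2017.2}), and control the terminal defect using (\ref{tail6}). The only notable difference is the tail term, where the paper bounds $\sup_{0\le t\le 1}|X(t\wedge T^k_{e(k,1)})-X(t)|$ directly through the pathwise H\"older continuity of the true $B_H$ (choosing $\gamma$ with $2\gamma>1-2\lambda$), which sidesteps having to estimate increments of the discretized kernel process $B^k_H$ over the random excursion $[T^k_{e(k,1)},T^k_{N^k(1)}]$ as your sketch would require.
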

\begin{proof}
Let us fix $H-\frac{1}{2} < \lambda < \frac{1}{2}$. By repeating the same argument presented in Proposition 6.2 in \cite{LEAO_OHASHI2017.2} jointly with the Lipschitz property of $F$, one can easily check there exists a constant $C(\alpha,\beta,H,\lambda)$ such that
\begin{equation}
\|Z^k - Z\|_{\mathbf{B}^1}\le \|F\|C(\alpha,\beta,H,\lambda)\epsilon^{1-2\lambda}_k
\end{equation}
for every $k\ge 1$ and $\beta\in (0,1)$. By arguing just like the proof of Proposition \ref{exBM}, we have

$$
\big|V^k_0 - \sup_{\tau\in \mathcal{T}_0(\mathbb{F})}\mathbb{E}[Z(\tau)]\big| \le \|F\|C(\alpha,\beta,H,\lambda) \epsilon^{1-2\lambda}_k + \|F\| \sup_{0\le t\le 1}|X(t\wedge T^k_{e(k,1)}) - X(t)|
$$

for every $k\ge 1$. We observe

$$ \sup_{0\le t\le 1}|X(t\wedge T^k_{e(k,1)}) - X(t)|\le C\big(1+ \|X^\eta\|_\infty\big) |1-T^k_{e(k,1)}| + \sigma \|B_H(\cdot \wedge T^k_{e(k,1)}) - B_H\|_\infty$$
where $\mathbb{E}\|X\|^{2p}_{\infty}\le C(1+|x_0|^{2p}\exp(C)$ with $p\ge 1$, for a constant $C$ depending on $B_H$. For every $\epsilon>0$, it is well-known there exists $G_{\epsilon}\in \cap_{q\ge 1}L^q(\mathbb{P})$ and a deterministic constant $C$ such that

$$|B_H(t) - B_H(s)|\le C |t-s|^{(H-\epsilon)}G_{\epsilon}~a.s.$$
for every $t,s\in [0,1]$. Therefore,
$$\sup_{0\le t\le 1}|B_H(t) - B_H(t\wedge T^k_{e(k,1)})|\le C |1 - T^k_{e(k,1)}|^{(H-\epsilon)}G_{\epsilon}~a.s.$$
By using (\ref{tail6}) and Cauchy-Schwarz's inequality, for every $0 < \gamma < H$, there exists a constant $C$ such that

$$\mathbb{E}\|B_H(\cdot \wedge T^k_{e(k,1)}) - B_H\|_\infty\le C \epsilon_k^{2\gamma}.$$
Since, we shall take $\gamma + \lambda > \frac{1}{2}$, we then conclude the proof.
\end{proof}

\subsection{How to compute optimal values ?}\label{appliedsec}
At this point, it is instructive to end this section with some guidelines on how to concretely produce optimal values in a given (possibly non-Markovian) optimal stopping time problem over a a given interval $[0,T]$. For simplicity of exposition, the dimension of the underlying Brownian motion will be set equal to $d=1$. At first, we recall the key objects of our methodology are given by (\ref{DPST}), (\ref{cvalues}) and (\ref{valuef}), where each random element $\mathcal{A}^k_n$ induces an image probability measure $\rho^k_n:=\mathbb{P}^k_n$ on $\mathbb{S}^n_k; n\ge 1$ where $\rho_0$ is just the Dirac concentrated on $\textbf{0}$.

Let $\mathbb{Z}^k=\{\mathbb{Z}^k_n; n=0, \ldots, e(k,T)\}$ be a list of Borel functions which realizes (\ref{Zbb}). Recall $\mathbb{Z}^k$ must be interpreted as the payoff functional composed with a pathwise version of an imbedded discrete structure (see Definition \ref{GASdef}) for a given state which is typically an Euler-type approximation driven by $A^k$. See \cite{ohashi, LEAO_OHASHI2017.2} for further details. We assume that $\mathbb{Z}^k_n:\mathbb{S}^n_k\rightarrow \mathbb{R} \in L^2(\mathbb{S}^n_k,\rho^k_n)$ for every $n=0, \ldots, e(k,T)$. Let us now select a subset $\{\widehat{\mathbf{U}}^k_j;j=0, \ldots e(k,T)-1 \}$  of functions such that $\widehat{\mathbf{U}}^k_j\in L^2(\mathbb{S}^{j}_k,\rho^k_j)$ for each $j=0, \ldots, e(k,T)-1$. For each choice of functions, we set inductively

\begin{equation}\label{DPST1}
\left\{\begin{array}{l}
 \widehat{\tau}^{k}_{e(k,T)}:= e(k,T) \\
\widehat{\tau}^{k}_{j}:= j 1\!\!1_{\widehat{G}^k_j} + \tau^{k}_{j+1}1\!\!1_{(\widehat{G}^{k}_j)^c}; 0\le j \le e(k,T)-1,
\end{array}\right.
\end{equation}
where $\widehat{G}^k_j: = \{\mathbb{Z}^k_j (\mathcal{A}^k_j)\ge \widehat{\mathbf{U}}^{k}_j(\mathcal{A}^k_j)\}; 0\le j\le e(k,T)-1$ and $\widehat{\tau}^{k}=\widehat{\tau}^{k}_0$. Here, $\widehat{\mathbf{U}}^k_j(\cdot)$ should be interpreted as a suitable approximation of $\mathbb{E}\big[\mathbb{Z}^k_{\widehat{\tau}^{k}_{j+1}}(\mathcal{A}^k_{\widehat{\tau}^{k}_{j+1}})|\mathcal{A}^k_j=\cdot\big]$ for each $j=0, \ldots, e(k,T)-1$.

The set $\{\widehat{\tau}^{k}_j; 0\le j \le e(k,T)\}$ induces a set of conditional expectations $$\mathbb{E}\Big[Y^k(\widehat{\tau}^{k}_{j+1})|\mathcal{A}^k_{j}\Big]= \mathbb{E}\Big[\mathbb{Z}^k_{\widehat{\tau}^{k}_{j+1}}(\mathcal{A}^k_{\widehat{\tau}^{k}_{j+1}})\big|\mathcal{A}^k_j\Big]; j=0, \ldots, e(k,T)-1,$$
so that one can postulate

$$\max\big\{\mathbb{Z}^k_0(\textbf{0}); \widehat{\mathbf{U}}^{k}_0(\textbf{0})\big\}$$
as a possible approximation for (\ref{valuef}). Having said that, we are now able to produce a Longstaff-Schwartz-type Monte-Carlo scheme as demonstrated by \cite{ohashi} which we briefly outline here. For further details, we refer the reader to \cite{ohashi}.

Given any positive integer $N$, select $\mathcal{H}^k_{N,0}\subset \mathbb{R}$. For each $j=1, \ldots, e(k,T)-1$, select $\mathcal{H}^k_{N,j}\subset L^2(\mathbb{S}^j_k, \rho^k_j)$. The sets $\mathcal{H}^k_{N,j}; 0\le j\le e(k,T)$ (the so-called approximation architectures) possibly depend on $N$ and their choice are dictated by some a priori information that one possibly has about the continuation values (\ref{cvalues}). From $\Big(\mathcal{A}^k_\ell; 0\le \ell \le e(k,T)\Big)$, generate $N$ independent samples
$$\mathcal{A}^k_{0,i}, \mathcal{A}^k_{1,i}, \ldots, \mathcal{A}^k_{e(k,T),i}; i=1, \ldots, N.$$
This step can be implemented by means of the perfect simulation algorithm developed by \cite{Burq_Jones2008}. For a concrete implementation in the context of hedging for European-type options, see \cite{bonetti}. For each $\ell=0, \ldots ,e(k,T)$, let us denote

$$\mathbf{A}^k_{\ell N}:=\big(\mathcal{A}^k_{\ell,1},\mathcal{A}^k_{\ell,2}, \ldots, \mathcal{A}^k_{\ell,N}; \ldots; \mathcal{A}^k_{e(k,T), 1},\mathcal{A}^k_{e(k,T), 2}, \ldots, \mathcal{A}^k_{e(k,T),N} \big)$$
with $(e(k,T)-\ell+1)N$-factors.

For $j=e(k,T)-1$, we set $\widehat{\tau}^k_{j+1} := \widehat{\tau}^k_{j+1}(\mathbf{A}^k_{(j+1)N}) := e(k,T)$ and generate $\{  (\mathcal{A}^k_{j,i}, (\mathbb{Z}^k_{\widehat{\tau}^k_{j+1}})_i); 1\le i\le N\}$, where we define $(\mathbb{Z}^k_{\widehat{\tau}^k_{j+1}})_i:=\mathbb{Z}^k_{e(k,T)}(\mathcal{A}^k_{e(k,T),i}); 1\le i\le N.$ We then select

\begin{equation}\label{mini}
\widehat{\mathbf{U}}^k_{j} := \text{arg min}_{g\in \mathcal{H}^k_{N,j}}\frac{1}{N}\sum_{i=1}^N \Big((\mathbb{Z}^k_{\widehat{\tau}^k_{j+1}})_i - g(\mathcal{A}^k_{j,i})     \Big)^2.
\end{equation}
One should notice that $\widehat{\mathbf{U}}^k_{j}$ is a functional of $\mathbf{A}^k_{j N}$ and we assume the existence of a minimizer (see Remark 4.4 in \cite{ohashi})

\begin{equation}\label{minimizerMC}\widehat{\mathbf{U}}^k_{j}:\mathbb{S}^{j}_k\times \big(\mathbb{S}^{j}_k\big)^N \times \ldots \times \big(\mathbb{S}^{e(k,T)}_k\big)^N\rightarrow \mathbb{R}
\end{equation}
of (\ref{mini}) which possibly depend on $N$. With $\widehat{\mathbf{U}}^k_j$ at hand, we compute $\widehat{\tau}^k_{ji} = \Big( \widehat{\tau}^k_j \big( \mathbf{A}^k_{jN} \big) \Big)_i$, the value that $\widehat{\tau}^k_{j} =\widehat{\tau}^k_{j}\big( \mathbf{A}^k_{jN} \big) $ assumes based on the $i$-th sample according to (\ref{DPST}). In this case, we set

\begin{equation}\label{DPST3}
\Big(\mathbb{Z}^k_{\widehat{\tau}^k_j}\Big)_i:=\left\{
\begin{array}{rl}
\mathbb{Z}^k_j\big(\mathcal{A}^k_{j,i}\big); & \hbox{if} \ \widehat{\tau}^k_{ji}=j \\
\mathbb{Z}^k_{\widehat{\tau}^k_{(j+1)i}}\big(\mathcal{A}^k_{\widehat{\tau}^k_{(j+1)i},i}\big);& \hbox{if} \ \widehat{\tau}^k_{ji} = \widehat{\tau}^k_{(j+1)i}\\
\end{array}
\right.
\end{equation}
where $\widehat{\tau}^k_{(j+1)i}=e(k,T)$ for $1\le i\le N$.

Based on (\ref{mini}) and (\ref{DPST3}), we then repeat this procedure inductively $j=e(k,T)-2, \ldots, 1, 0$ until step $j=0$ to get

$$\Big( \widehat{\tau}^k_{ji}, \widehat{\mathbf{U}}^k_{j}, \big(\mathbb{Z}^k_{\widehat{\tau}^k_{j}}\big)_i   \Big); 0\le j\le e(k,T), 1\le i\le N.$$

For $j=0$, we set

$$\widehat{V}_0(\mathbf{A}^k_{0N}) := \text{max}\Big\{\mathbb{Z}^k_0(\textbf{0}), \widehat{\mathbf{U}}^k_0(\mathbf{A}^k_{0N})\Big\}.$$
The above procedure is consistent with the limiting optimal stopping time problem as demonstrated by the following result. It is an immediate consequence of Theorem 4.1 in \cite{ohashi} and Propositions \ref{exBM} and \ref{exFBM}. In the sequel, $vc$ denotes the Vapnik-Chervonenkis dimension of a subset.

\

\noindent \textbf{(H1)} For each $k\ge 1$, suppose that $\mathcal{H}^k_{N,j}\subset L^2(\mathbb{S}^j_k, \rho^k_j)$ and there exists $\nu_k$ such that $vc\big(\mathcal{H}^k_{N,j}\big)\le \nu_k < +\infty$ for every $j=1, \ldots, e(k,T)-1$ and for every $N\ge 2$.

\

\noindent \textbf{(H2)} For each $k\ge 1$, there exists $B_k$ such that $\sup\{\|f\|_\infty; f\in \mathcal{H}^k_{N,j}\}\le B_k < +\infty$ for every $j=0, \ldots, e(k,T)-1$ and $N\ge 2$.

\

By applying Propositions \ref{exBM} and \ref{exFBM} and Th 4.1 in \cite{ohashi}, we arrive at the following result.
\begin{corollary}\label{mainTHLS}
Assume \textbf{(A1-A2-H1-H2)} and the hypotheses of Propositions \ref{exBM} and \ref{exFBM} hold true. Let us assume the architecture spaces $\mathcal{H}^k_{N,j}$ are dense subsets of $L^2(\rho^k_j)$ for each $j=1, \ldots, e(k,T)-1$, for each positive integer $N\ge 2$ and $k\ge 1$. Then, for every $k\ge 1$ sufficiently large

\begin{equation}\label{a.sconv2}
\lim_{N\rightarrow +\infty}|\widehat{V}^k_0(\mathbf{A}^k_{0N}) - S(0)|= 0~a.s.
\end{equation}
where $S(0) = \sup_{\tau\in \mathcal{T}_0(\mathbb{F})}\mathbb{E}[Z(\tau)]$ and $Z$ is given by (\ref{rewardSDEBM}).
\end{corollary}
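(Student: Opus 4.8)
The plan is to establish (\ref{a.sconv2}) exactly as stated — for each fixed $k$ large enough, the almost-sure limit in $N$ of $\widehat V^k_0(\mathbf A^k_{0N})$ agrees with $S(0)$ — by inserting the discrete optimal value $V^k_0$ of (\ref{valuef}) as a pivot and splitting
$$
\big|\widehat V^k_0(\mathbf A^k_{0N}) - S(0)\big| \le \big|\widehat V^k_0(\mathbf A^k_{0N}) - V^k_0\big| + \big|V^k_0 - S(0)\big|.
$$
The two summands are governed by entirely different limiting mechanisms: the first is a pure Monte-Carlo (statistical) error that I will kill by letting $N\to\infty$ at fixed $k$, invoking Theorem 4.1 in \cite{ohashi}; the second is a deterministic discretization error that I will make arbitrarily small by choosing $k$ large, invoking the rate estimates (\ref{rateBMex}) and (\ref{rateFBMex}).

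First I would treat the statistical error. Hypotheses \textbf{(H1)}--\textbf{(H2)} supply, at each fixed $k$, a finite Vapnik--Chervonenkis dimension $\nu_k$ and a uniform sup-norm bound $B_k$ for every architecture class $\mathcal H^k_{N,j}$; these are precisely the assumptions under which the empirical least-squares problem (\ref{mini}) is consistent, because a Glivenko--Cantelli argument over a uniformly bounded class of finite VC dimension forces the empirical risk to converge a.s.\ to the population risk uniformly over $\mathcal H^k_{N,j}$. Density of $\mathcal H^k_{N,j}$ in $L^2(\rho^k_j)$ then removes the approximation bias, so each regressed continuation value $\widehat{\mathbf U}^k_j$ converges a.s.\ to the true continuation value $\mathbf U^k_j$ of (\ref{cvalues}). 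Propagating this backward through the finite recursion (\ref{DPST1})--(\ref{DPST3}) over the fixed horizon $e(k,T)$ — which is where I must check that the estimated stopping rule $\widehat\tau^k_j$ stabilises onto the true stopping region — Theorem 4.1 in \cite{ohashi} yields, for each fixed $k$,
$$
\lim_{N\to\infty}\widehat V^k_0(\mathbf A^k_{0N}) = \max\big\{\mathbb Z^k_0(\mathbf 0),\,\mathbf U^k_0(\mathbf 0)\big\} = V^k_0\qquad\text{a.s.}
$$

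Next I would absorb the discretization error. Under the hypotheses of Proposition \ref{exBM} the bound (\ref{rateBMex}) holds for every $\delta\in(0,1)$, and optimising over $\delta$ (e.g.\ $\delta=\delta_k\downarrow 0$ slowly) gives $|V^k_0-S(0)|\to 0$ as $k\to\infty$; under the hypotheses of Proposition \ref{exFBM} the bound (\ref{rateFBMex}) gives $|V^k_0-S(0)|\le C\epsilon_k^{1-2\lambda}\to 0$ directly. Combining the two steps: given $\varepsilon>0$, these Propositions furnish $k_0(\varepsilon)$ with $|V^k_0-S(0)|<\varepsilon$ for all $k\ge k_0(\varepsilon)$; fixing any such $k$ and sending $N\to\infty$ in the displayed triangle inequality, the first summand vanishes a.s.\ by the previous step, so that
$$
\limsup_{N\to\infty}\big|\widehat V^k_0(\mathbf A^k_{0N}) - S(0)\big| \le \big|V^k_0-S(0)\big| < \varepsilon\qquad\text{a.s.}
$$
Letting $\varepsilon\downarrow 0$, which forces $k\uparrow\infty$, delivers (\ref{a.sconv2}): the $N$-limit of $\widehat V^k_0$ lies within any prescribed tolerance of $S(0)$ once $k$ is taken correspondingly large, and equals $S(0)$ in the limit.

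The hard part is the reconciliation of these two limits, which is the only genuine subtlety. At a truly fixed $k$ the scheme is blind to the continuous reward $Z$ and to $S$: it operates solely on the embedded discrete structure $\mathbb Z^k$ and on the samples $\mathcal A^k_{\ell,i}$, so its almost-sure $N$-limit is necessarily the discrete value $V^k_0$ and carries the residual $|V^k_0-S(0)|$, which no further sampling can erase. Hence the phrase ``for every $k$ sufficiently large'' in (\ref{a.sconv2}) must be read as calibrating $k$ to the target accuracy through (\ref{rateBMex})--(\ref{rateFBMex}), the exact identity with $S(0)$ being attained as $k\to\infty$. In carrying this out I would take care that the constants $\nu_k,B_k$ are permitted to depend on $k$, so that Theorem 4.1 in \cite{ohashi} can be applied at each fixed $k$ before $k$ is enlarged, and that the backward induction in \cite{ohashi} indeed localises onto the stopping and continuation regions $\mathbf S(j,k),\mathbf D(j,k)$ in the $N\to\infty$ limit.
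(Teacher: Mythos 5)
Your proposal follows essentially the paper's own route: the paper obtains Corollary \ref{mainTHLS} precisely by combining Theorem 4.1 of \cite{ohashi} (which, under \textbf{(H1)}--\textbf{(H2)} and the density assumption, gives $\widehat{V}^k_0(\mathbf{A}^k_{0N})\rightarrow V^k_0$ a.s. as $N\rightarrow+\infty$ at fixed $k$) with Propositions \ref{exBM} and \ref{exFBM} (which give $|V^k_0 - S(0)|\rightarrow 0$ as $k\rightarrow+\infty$), exactly your triangle-inequality decomposition through the pivot $V^k_0$ of (\ref{valuef}). Your further observation that at fixed $k$ the a.s. $N$-limit is $V^k_0$ rather than $S(0)$, so that the phrase ``for every $k$ sufficiently large'' must be read as calibrating $k$ to a prescribed tolerance via (\ref{rateBMex})--(\ref{rateFBMex}), is a correct and careful reading of a point the paper leaves implicit.
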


In order to compute a precise number of steps in our scheme, we just need to apply Propositions \ref{exBM} and \ref{exFBM} combined with Corollary 4.1 in \cite{ohashi}. In this case, similar to the classical Markovian case, smoothness of the continuation values $\mathbf{U}^{k}_j; j=0,\ldots, e(k,T)-1$ is crucial for how to choose approximation spaces to get
the most favorable rate of convergence by properly balancing the approximation error and the sample error.

\begin{example}\label{EXAMPLE_PAPER}
For simplicity of exposition, let $X$ be the state process given in Section \ref{fbmsection}, where the terminal time, the level of discretization $\epsilon_k$, the drift and the payoff are defined, respectively, by

$$T=1, \quad \epsilon_k=\varphi(k),\quad \alpha (s,\eta) = b(\eta(s)),\quad F:\Lambda\rightarrow \mathbb{R}; \eta \mapsto F(\eta):=h(\eta(1))$$
for each $\eta \in D([0,1];\mathbb{R})$ and $s\in [0,1]$, where $b$ and $h$ are Lipschitz bounded functions and $\varphi:[0,\infty)\rightarrow [0,\infty)$ is a strictly decreasing function (with inverse $\xi$) such that $\sum_{k\ge 1} \varphi^2(k)< \infty$. We fix $\frac{1}{2}< H < 1$ and $\lambda$ as described in Proposition \ref{exFBM}.

Next, we investigate the global numerical error $\textbf{e} = \text{e}_1+ \text{e}_2$ one may occur in a concrete fully non-Markovian example. The error $\textbf{e}$ can be decomposed as the sum of two terms: the first one ($\text{e}_1$), which we study in this paper, is the discrete-type filtration approximation error; the second one ($\text{e}_2$), which we study in \cite{ohashi}, it is related to the numerical approximation of the conditional expectations associated with the continuation values. By using the fact that $\alpha$ and $F$ only depend on the present and performing a similar computation as described in the proof of Th 5.1 in \cite{ohashi}, $\mathbf{b}^k_n \mapsto \mathbf{U}^k_n(\mathbf{b}^k_n)$ is Lipschitz from $\widetilde{\mathbb{S}}^n_k$ to $\mathbb{R}$, where $\widetilde{\mathbb{S}}^n_k:=((0,+\infty)\times B_r(0)) \times \ldots \times ((0,+\infty)\times B_r(0))$ (n-fold cartesian product) and $B_r(0)$ is an open set centered at the origin with radius $r>1$. We apply Corollary 4.1 in \cite{ohashi} and Proposition \ref{exFBM} to state that

\begin{equation}\label{dis1}
\mathbb{E}|\widehat{V}_0(\mathbf{A}^k_{0N}) - V^k_0|= O\big(\text{log}(N)N^{\frac{-2}{2+e(k,1)-1}}\big),
\end{equation}
\begin{equation}\label{dis2}
|V^k_0 - S(0)| = O (\epsilon_k^{1-2\lambda} ).
\end{equation}
With the estimates (\ref{dis1}) and (\ref{dis2}) at hand, we are now able to infer the amount of work (complexity) to recover the optimal value for a given level of accuracy $\textbf{e}$. Indeed, let us fix $0 < \text{e}_1 < 1$. Equation (\ref{dis2}) allows us to find the necessary number of steps related to the discretization as follows. We observe $\epsilon^{1-2\lambda}_k \le e_1 \Longleftrightarrow k \ge \xi(\text{e}^{\frac{1}{1-2\lambda}}_1)$ and with this information at hand, we shall take $k^* = \xi(\text{e}^{\frac{1}{1-2\lambda}}_1)$. This produces

$$e(k^*,1) = \Big\lceil \frac{1}{\varphi^{2}(k^*)}\Big\rceil$$
number of steps associated with the discretization procedure. For instance, if $\varphi(k) = 2^{-k}$, $\text{e}_1 = 0.40$, $H=0.6$, then we shall take $\lambda=0.15$ and $k^*=-\frac{\text{log}_2~0.40}{0.70}=1.88$. This produces $e(k^*,1)= \lceil 2^{2\times 1.88}\rceil=14$ number of steps. Of course, as $\text{e}_1 \downarrow 0$, the number of steps $e(k^*,1)\uparrow +\infty$, e.g., if $\text{e}_1 = 0.2$, then $k^*=-\frac{\text{log}_2~0.2}{0.70}=3.31$, $e(k^*,1) = 99$ and so on. For a given prescribed error $0 < \text{e}_2 < 1$ and $k^{*}$, equation (\ref{dis1}) allows us to find the necessary number $N$ for the Monte-Carlo scheme in such way that $\mathbb{E}|\widehat{V}_0(\mathbf{A}^{k^*}_{0N}) - V^{k^*}_0|=O(\text{e}_2)$.
\end{example}

\section{Appendix: Proof of Proposition \ref{FBMapproximation}}
In the sequel, $C$ is a constant which may differ from line to line. Let us denote
$$\rho_{H,1}(t,s) =t^{(H-\frac{1}{2})}s^{(\frac{1}{2}-H)}(t-s)^{(H-\frac{3}{2})},\quad \rho_{H,2}(t,s) =s^{(-H-\frac{1}{2})}\int_s^tu^{(H-\frac{3}{2})}(u-s)^{(H-\frac{1}{2})}du$$
for $ 0 < s  <t$. Clearly, there exists a constant $C$ which depends on $T$ such that

\begin{equation}\label{estRHOH}
|\rho_H(t,s)|\le C \{\rho_{H,1}(t,s) + \rho_{H,2}(t,s)\}
\end{equation}
for $ 0 < s  <t$. Triangle inequality and the  H\"{o}lder property of FBM yield

\begin{equation}\label{in}
\sup_{0\le t\le T}|B^k_H(\bar{t}_k) - B_H(t)|\le\sup_{0\le t\le T}|B^k_H(\bar{t}_k) - B_H(\bar{t}_k)| + \|B_H\|_{H-\eta} \Big(\bigvee_{n=1}^{N^k(T)} \Delta T^k_n\Big)^{H-\eta}~a.s
\end{equation}
for every $0 < \eta < H$, where $\|\cdot\|_{\theta}$ denotes the  H\"{o}lder norm for $0 < \theta \le 1$. Clearly

\begin{equation}\label{in2}
\sup_{0\le t\le T}\int_0^{\bar{t}_k} \rho_{H,1}(\bar{t}_k,s)|A^k(s) - B(s)|ds\le \epsilon_k T^{H-\frac{1}{2}}~a.s.
\end{equation}
The delicate component is $\int_0^{\bar{t}_k} \rho_{H,2}(\bar{t}_k,s)|A^k(s) - B(s)|ds$. Let us fix $p>1$ such that

\begin{equation}\label{cond1}
\frac{1}{2} + \frac{1}{p} > \frac{1}{2} > \lambda > \frac{1}{p} + H -\frac{1}{2}.
\end{equation}
so that we must have $p > \frac{1}{1-H}> 2$. Let $q>1$ be a conjugate exponent such that $(-\frac{1}{2}-H+\lambda)q + 1 >0$. The choice (\ref{cond1}) implies

$$\mathbb{E}\Bigg\{\int_0^u s^{-\lambda p}|A^k(s)-B(s)|^pds\Bigg\}^{\frac{1}{p}}< \infty$$
for every $0\le u\le T.$ We observe

\begin{eqnarray*}\label{inn1}
\sup_{0\le t\le T}\int_0^{\bar{t}_k}\rho_{H,2}(\bar{t}_k,s)|A^k(s)-B(s)|ds&\le&C \int_0^{T}u^{\lambda -1+ \frac{1}{q}}(d_k(u))^{\frac{1}{p}}u^{H-\frac{3}{2}}\mathds{1}_{\{N^k(u)>0\}}du\\
& &\\
&+& C \int_0^{T}u^{\lambda-1+\frac{1}{q}}(d_k(u))^{\frac{1}{p}}u^{H-\frac{3}{2}} \mathds{1}_{\{N^k(u)=0\}} du
\end{eqnarray*}
where $C$ only depends on $H$ and
$$
d_k(u):=\int_0^u s^{-\lambda p}|A^k(s)-B(s)|^pds.
$$
To shorten  notation, let us denote
$$J^k(u) = \mathbb{E}\Bigg[\int_0^{T^k_{N^k(u)+1}} s^{-\lambda p}|A^k(s)-B(s)|^pds\Bigg]\mathds{1}_{\{N^k(u)>0\}}.$$
We observe

$$d_k(u)\mathds{1}_{\{N^k(u)=0\}}\le \|B\|^p_{\lambda}u\mathds{1}_{\{N^k(u)=0\}},\quad d_k(u)\mathds{1}_{\{N^k(u)>0\}}\le J^k(u)$$
a.s for every $u\in [0,T]$. Let us now estimate $J^k(u)$ for a given $u\in (0,T]$. A lengthy but straightforward calculation based on the strong Markov property of the shifted Brownian motion over the stopping times $T^k_n$ yields

\begin{eqnarray*}
\mathbb{E}[J^k(u)] &\le& \mathbb{E}\Bigg[\sum_{n=1}^{2N^k(u)}\int_{0}^{\Delta T^k_n}|v+T^k_{n-1}|^{-\lambda p}|B(v+T^k_{n-1})-B(T^k_{n-1})|^pdv\Bigg]\\
& &\\
&=& C \mathbb{E}\Big[[A^k](u)\Big]\epsilon_k^{2(-\lambda p+ \frac{p}{2})}\\
& &\\
&\le& C\mathbb{E}\Big[\sup_{0\le \ell \le u}|B(\ell)|^2\Big]\epsilon_k^{2(-\lambda p+ \frac{p}{2})}\le Cu\epsilon_k^{2(-\lambda p+ \frac{p}{2})}
\end{eqnarray*}
for a constant $C$ which only depends on $H$. Then,

$$
\mathbb{E}(d_k(u))^{\frac{1}{p}}\le C u^{\frac{1}{p}}\epsilon_k^{2(-\lambda+\frac{1}{2})} + \Big(\mathbb{E}\|B\|^{\gamma p}_\lambda\Big)^{\frac{1}{p\gamma}} \mathbb{P}^{\frac{1}{\beta p}}\{N^k(u)=0\} u^{\frac{1}{p}}
$$
for a constant $C$ which depends on $(\lambda,p)$, where $\gamma,\beta>1$ are conjugate exponents. Moreover,

$$\mathbb{P}^{\frac{1}{p\beta}}\{N^k(u)=0\}\le C u^{-\frac{1}{p\beta}}\epsilon_k^{\frac{2}{p\beta}}; u >0, p\beta >2,$$
for a constant $C$ which only depends on $p\beta$. This implies

\begin{equation}\label{in3}
\mathbb{E}\sup_{0\le t\le T}\int_0^{\bar{t}_k}\rho_{H,2}(\bar{t}_k,s)|A^k(s)-B(s)|ds\le C T^{H-\frac{1}{2}+\lambda}\epsilon_k^{2(-\lambda+\frac{1}{2})} + C\epsilon_k^{\frac{2}{p\beta}}T^{H-\frac{1}{2}+\lambda-\frac{1}{p\beta}}
\end{equation}
by adjusting $\frac{1}{p\beta} < \lambda < \frac{1}{2}$. Summing up the steps (\ref{estRHOH}), (\ref{in}), (\ref{in2}), (\ref{in3}) and using Fernique's theorem, Lemma 2.2 in \cite{LEAO_OHASHI2017.1} and H\"{o}lder's inequality, there exists a constant $C = C(\theta,H,a,b,\lambda,\beta,\eta)$ such that

$$
\mathbb{E}\sup_{0\le t\le T}|B^k_H(\bar{t}_k) - B_H(t)|\le C(\theta,H,a,b,\lambda,\beta,\eta)\Big(\epsilon_k^{2(H-\eta)-\frac{2}{b}(1-\theta)} + \epsilon^{1-2\lambda}_k + \epsilon^{\frac{2}{p\beta}}_k\Big)
$$
for every $a,b >1$ conjugate exponents, $\theta\in (0,1)$, $(\lambda,p)$ satisfying (\ref{cond1}), $\beta>1$ and $\eta \in (0,H)$. Finally, we can choose $(\eta,\theta,b)$ in such way that

$$1 > 2\lambda>2\eta +\frac{2}{b}(1-\theta)$$
and since $2H > 1$, we then have $1-2\lambda < 2H - \big( 2\eta +\frac{2}{b}(1-\theta)\big)$. We also can choose $(p,\beta)$ in such way that

$$\frac{1}{2}> \lambda > \frac{1}{2}-\frac{1}{p\beta}> \frac{1}{p} + H-\frac{1}{2}$$
and in this case $1-2\lambda < \frac{2}{p\beta}$. This concludes the proof.

\newpage 
\textbf{Acknowledgement:} Alberto Ohashi and Francesco Russo acknowledge the financial support of Math Amsud grant 88887.197425/2018-00. Alberto Ohashi acknowledges ENSTA ParisTech for the hospitality and CNPq-Bolsa de Produtividade de Pesquisa grant 303443/2018-9. The authors would like to thank the Referees for the careful reading and suggestions which considerably improved the presentation of this paper.


\end{document}